\newtheorem{theo}{Theorem}[section]
\newtheorem{lemm}[theo]{Lemma}
\newtheorem{defi}[theo]{Definition}
\newtheorem{prop}[theo]{Proposition}
\newtheorem{rema}[theo]{Remark}
\numberwithin{equation}{section}
\begin{document}
\title{Global Gevrey regularity and analyticity of a two-component shallow water system with Higher-order inertia operators}
\author{
Huijun $\mbox{He}^1$\footnote{email: hehuijun@mail2.sysu.edu.cn} \quad and\quad
 Zhaoyang $\mbox{Yin}^{1,2}$\footnote{email: mcsyzy@mail.sysu.edu.cn}\\
 $^1\mbox{Department}$ of Mathematics,
Sun Yat-sen University,\\ Guangzhou, 510275, China\\
$^2\mbox{Faculty}$ of Information Technology,\\ Macau University of Science and Technology, Macau, China}
\date{}
\maketitle
\begin{abstract}
 In this paper, we mainly  consider the  Gevrey regularity and analyticity of the solution to  a generalized two-component shallow water wave system with  higher-order inertia operators, namely, $m=(1-\partial_x^2)^su$ with $s>1$.  Firstly, we obtain the Gevrey regularity and analyticity for a short time. Secondly, we show the continuity of
 the data-to-solution map. Finally, we prove the  global Gevrey regularity and analyticity  in time.

 2010 Mathematics Subject Classification:  35Q53 (35B30 35B44 35C07 35G25)
 \end{abstract}
 \noindent \textit{Keywords}:  Two-component shallow water system with  higher-order inertia operators;
analyticity; global Gevrey regularity;
\tableofcontents
\section{Introduction}
\par

In this paper we mainly consider the analyticity and  Gevrey regularity  of  the solution to the following
generalized two-component shallow water wave system with higher-order inertia operators\cite{Escher-new}:
\begin{equation}\label{1}
 \ \ \ \ \ \ \ \  \ \ \ \ \left\{\begin{array}{ll}
m_{t}+um_{x}+amu_x =\alpha u_x-\kappa\rho\rho_{x} ,&t > 0,\,x\in \mathbb{R},\\
 \rho_{t}+u\rho_x+(a-1)u_x\rho=0, &t > 0,\,x\in \mathbb{R},\\
 m(t,x)=(1-\partial_x^2)^su(t,x),&t \geq 0,\,x\in \mathbb{R},\\
u(0,x) = u_{0}(x),&x\in \mathbb{R}, \\
\rho(0,x) = \rho_{0}(x),&x\in \mathbb{R},
\end{array}\right.
\end{equation}
where $ s>1$, $a\neq1$ is a real parameter, $\alpha$ is a constant which represents the vorticity of underlying flow, and $\kappa>0$ is an arbitrary real parameter.  The
 system  (1.1) is the generalization of the two component  water wave system  with $s=1$, namely, $m=(1-\partial_x^2)u=u-u_{xx}$ (see \cite{E-H-K-L},\cite{G-H-Y} and \cite{He-Yin}).

 For $\alpha=0, \rho\equiv0$, the system (1.1) becomes a  family of one-component equations
\begin{align}\label{B}
\left\{\begin{array}{lll}
       m_t+um_x+au_xm=0, &t > 0,\,x\in \mathbb{R},\\
       m(t,x)=(1-\partial_x^2)^su(t,x),&t\geq 0,\,x\in \mathbb{R},\\
       u(t,x)|_{t=0}=u_0(x).&x\in \mathbb{R}.
       \end{array}\right.
\end{align}
When $s=1,$  the equation  (\ref{B}) is called the b-equation. The b-equation possess a
number of structural phenomena which are shared by solutions of the family
of equations \cite{E-Y2008,H1,H2}. Recently, some authors were devoted to the study of  the Cauchy problem for the b-equation.  The local well-posedness of the b-equation was obtained by Escher and Yin in \cite{E-Y2008} and Gui, Liu and Tian in \cite{G-L-T}, respectively, on the line and Zhang and Yin
in \cite{Z-Y} on the circle. It also has global solutions \cite{E-Y2008,G-L-T,Z-Y} and solutions which blow up in finite time \cite{E-Y2008,G-L-T,Z-Y}. The
uniqueness and existence of global weak solution to the b-equation provided the initial data satisfies certain sign conditions
were obtained in \cite{E-Y2008,Z-Y}. However, there are just two members of this family
which are integrable \cite{Iv1}: the Camassa-Holm \cite{C-D1,C-D2} equation, when
a =2, and the Degasperis-Procesi \cite{D-P} equation,when $a=3.$ The Cauchy problem and initial-boundary value problem for the
Camassa-Holm equation have been studied extensively \cite{C-Ep,
C-Em,Dan2, E-Y1, E-Y2, L-O, Rb, y}. It has been shown that this
equation is locally well-posed \cite{C-Ep, C-Em,Dan2, L-O, Rb} for
initial data $u_{0}\in H^{q}(\mathbb{R}),\,q>\frac{3}{2}$. More
interestingly, it has global strong solutions \cite{Constantin, C-Ep,C-Em}
and also finite time blow-up solutions \cite{Constantin,C-E, C-Ep,C-Em,
C-Ez,Dan2,L-O,Rb}. On the other hand, it has global weak solutions in
$ H^1(\mathbb{R}) $ \cite{B-C,B-C1,C-Ei,C-M,X-Z}. Finite propagation speed and persistence properties for the Camassa-Holm equation have been studied in \cite{Constantin1,H-M-P}. After the Degasperis-Procesi equation was derived, many papers
were devoted to its study, cf.
\cite{C-K1,E-K,E-L-Y1,E-L-Y2,L,L-Y1,L-S,Matsuno,Yin1,Yin2,Yin3,Yin4}.
When $s=k\geq2$, the equation (\ref{B}) becomes higher-order b-equation. In \cite{M-Z-Z}, Mu et. al  studied the  local
 well-posedness and  global solutions for (\ref{B}) (under a scaling transformation) with $k=2$ in Sobolev
 spaces.  In \cite{C-H-K1}, Coclite et.al considered the cases $a=2, k\geq2, m=(1-\partial_x^2+\partial_x^4-...+(-1)^{k}\partial_x^{2k})u$ --- the higher-order
 Camassa-Holm equations, which describe the
exponential curves of the manifold of smooth orientation-preserving diffeomorphisms of the unit
circle in the plane. They established the existence of the  unique global weak solutions.

For $a=2$ and $\alpha=0$,  the system (1.1) becomes the two-component  Camassa-Holm equation. Several types of 2-component Camassa-Holm equations have
been studied in \cite{Chen, C-I, E,E-K-L,E-L-Y,G-K-Y, G-Y,G-Y1,G-Y2, G-Y3,G-Y4}.  These works
established the local well-posedness \cite{C-I, E-L-Y,G-K-Y,G-Y}, derived
precise blow-up scenarios \cite{E-L-Y,G-K-Y}, and proved that there
exist strong solutions which blow up in finite time
\cite{C-I, E-L-Y, G-Y}.  It also
has global strong solutions \cite{C-I,G-Y}. Moreover, it has global
weak solutions  \cite{G-Y1,G-Y2, G-Y3,G-Y4,T-Y1,T-Y2}.

The system (1.1) with $s>1$ was
recently introduced by
 Escher and Lyons in \cite{Escher-new}. It is the generalization of the same model (1.1) with $s=1$ in
   \cite{E-H-K-L}. In  \cite{E-H-K-L}, the authors  proved the local well-posedness of (1.1) with $s=1$ by using a geometrical framework and they studied the blow-up  scenarios and global strong solutions of (1.1) in the periodic case. In \cite{G-H-Y}, Guan et al.  studied the local well-posedness of (1.1)  with $s=1$  on the line in supercritical Besov spaces,
   and  several  blow-up results and the persistence properties.  In \cite{He-Yin}, He and Yin studied the local well-posedness  of (1.1) with $s=1$ in the critical Besov spaces on the line and the existence of analytic solutions of the system.
  In \cite{Escher-new}, for $s>1$, by a geometric approach, the authors gave a blow-up criteria to ensure the geodesic completeness on the circle with $s>\frac32, a=2, \kappa \geq 0$ for the $C^\infty$ initial data. In \cite{He-Yin-new}, the
  authors proved the local well-posedness results in Besov spaces with certain regularity condition, and gave some
  global existence results.

    In this paper, we focus on the analyticity and Gevrey regularity of the system (1.1). Many researchers  studied the  analyticity  for solutions  to
    Camassa-Holm type systems, cf. \cite{BHP,HM} and \cite{Yan-Yin}. Recently, Luo and Yin \cite{Luo-Y} studied  the Gevrey regularity of solutions  to the Camssa-Holm  type  system  by  a generalized  Ovsyannikov theorem (see Lemma \ref{ACK-Luo} below).
    Applying this lemma, and following the ideas of \cite{Luo-Y}, we obtain the local analyticity and Gevrey regularity of the
    solutions to system (1.1). Also, we prove the continuity of the  data-to-solution map. Considering  the existence of the global strong solution of system (1.1), with the idea from Levermore and Oliver\cite{Lev-O} or Foias and Temam
    \cite{Foi-T}, we also study the global analyticity
    and  Gevrey regularity of this system.
    First, we will show that, its solution  is of analyticity
    and Gervey regularity at least for a short time. Then, we will also show that, under certain conditions, the solution will
    keep in  analyticity or Gevrey regularity for all $t\geq0$.

Our paper is organized as follows. In Section 2, we give some  preliminaries which will be used
 in Section 3. In Section 3, we establish the
local analyticity and Gevrey regularity of the Cauchy problem associated with
(1.1) and with (\ref{B}).
In  Section 4, we discuss global analyticity and Gevrey regularity.

\section{Preliminaries}
We consider the Cauchy problem for the above system which can be rewritten in the following abstract form:
\begin{align}\label{ACP}
  \left\{
  \begin{array}{lll}
  \frac{\mathrm d u}{\mathrm d t}=F(t,u(t)),\\
  u|_{t=0}=u_0.
  \end{array}\right.
   \end{align}
 \begin{lemm}\cite{BG,Ov1}
Let $\{X_\delta\}_{0<\delta<1}$ be a scale of decreasing Banach spaces, namely, for any
$\delta'<\delta$ we have $X_{\delta}\subset X_{\delta'}$ and  $\|\cdot\|_{\delta'} \leq\|\cdot\|_\delta,$
and let $T,R>0.$ For given $u_0\in X_1,$ assume that:
\begin{itemize}
 \item [(1)]
 If for $0<\delta'<\delta<1$ the function $t\mapsto u(t)$ is holomorphic in $|t|<T$ and continuous on $|t|\leq T$
 with values in $X_\delta$ and
 $$\sup\limits_{|t|<T} \|u(t)\|_{\delta}<R,$$
 then $t\mapsto F(t,u(t))$ is a holomorphic function on $|t|<T$ with values  in $X_{\delta'}.$
 \item [(2)]
  For any $0<\delta'<\delta<1$ and any $u,v\in \overline{B(u_0,R)} \subset X_\delta,$ there exists a positive constant $L$
  depending on $u_0$ and $R$ such that
  $$\sup\limits_{|t|<T} \|F(t,u)-F(t,v)\|_{\delta'}\leq \frac L{\delta-\delta'}\|u-v\|_{\delta}.$$
 \item [(3)]
 For any $0<\delta<1,$ there exists a positive constant $M$ depending  on $u_0$ such that
 $$\sup\limits_{|t|<T} \|F(t,u_0)\|_{\delta}\leq \frac M{1-\delta}.$$
 \end{itemize}
 Then there exist  a $T_0\in(0,T)$ and a unique solution to the Cauchy problem (\ref{ACP}), which for  every
 $\delta\in(0,1)$ is holomorphic  in $|t|<T_0(1-\delta)$ with values in $X_\delta.$
 \end{lemm}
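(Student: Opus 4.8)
The plan is to construct the solution by Picard iteration in the complex time disk, the crucial feature being that the radius of the disk on which the $X_\delta$-valued iterates live is allowed to shrink linearly as $\delta\uparrow 1$. Concretely I would set $u^{(0)}(t)\equiv u_0$ and $u^{(n+1)}(t)=u_0+\int_0^t F(\tau,u^{(n)}(\tau))\,d\tau$, the integral taken along the segment from $0$ to $t$. Hypothesis (1) guarantees inductively that each $u^{(n)}$ is holomorphic in $t$ with values in $X_\delta$ and continuous up to the boundary (integrating a holomorphic $X_{\delta'}$-valued map preserves holomorphy), while hypothesis (3) handles the base step through $\|u^{(1)}(t)-u^{(0)}(t)\|_\delta\le |t|\,M/(1-\delta)$; the applicability of (1) at each stage will be justified by the ball containment $u^{(n)}(t)\in\overline{B(u_0,R)}$ established below. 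The goal is then to show that $\sum_n\bigl(u^{(n+1)}-u^{(n)}\bigr)$ converges in $X_\delta$, uniformly on $|t|<T_0(1-\delta)$ for a suitable $T_0\in(0,T)$.

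The heart of the argument is the telescoping estimate that must absorb the derivative loss recorded by the singular factor $1/(\delta-\delta')$ in hypothesis (2). Writing $u^{(n+1)}-u^{(n)}=\int_0^t\bigl[F(\tau,u^{(n)})-F(\tau,u^{(n-1)})\bigr]\,d\tau$ and applying (2) shows that estimating the $n$-th difference in $X_\delta$ requires the $(n-1)$-th difference in a strictly larger space $X_{\delta'}$; iterating this forces a chain of radii $\delta=\delta_0<\delta_1<\cdots<1$. The decisive device is to \emph{equidistribute} the available gap: at the $n$-th step I would choose $\delta'=\delta+(1-\delta)/(n+1)$, so that $1/(\delta'-\delta)=(n+1)/(1-\delta)$ while $1-\delta'=\tfrac{n}{n+1}(1-\delta)$. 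With this choice one proves by induction the bound
\[
\|u^{(n)}(t)-u^{(n-1)}(t)\|_\rho\;\le\; M\,(2L)^{\,n-1}\,\frac{|t|^{\,n}}{(1-\rho)^{\,n}}\cdot\frac{n^{\,n-1}}{n!},\qquad 0<\rho<1,
\]
the factor $n^{\,n-1}/n!$ being exactly what the equidistribution generates: the time integral contributes a factor $1/(n+1)$ at each stage, and the ratios $(n+1)^n/n^n$ coming from $1-\delta'=\tfrac{n}{n+1}(1-\delta)$ telescope, with the residual $(n+1)/n\le 2$ absorbed into the constant $2L$.

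Once this estimate is in hand, Stirling's inequality $n^n/n!\le e^n$ turns it into $\|u^{(n)}-u^{(n-1)}\|_\rho\le \tfrac{M}{2Ln}\,q^{\,n}$ with $q=2eL|t|/(1-\rho)$. Choosing $T_0\le 1/(2eL)$, further shrunk if necessary so that the geometric sum $\tfrac{M}{2L}\tfrac{q}{1-q}$ stays below $R$ and $T_0<T$, makes $q<1$ throughout $|t|<T_0(1-\delta)$; the series then converges geometrically and uniformly to a limit $u(t)$ which remains in $\overline{B(u_0,R)}\subset X_\delta$. As a locally uniform limit of holomorphic $X_\delta$-valued maps, $u$ is holomorphic, and passing to the limit in the integral equation (legitimate since (2) makes $v\mapsto F(\tau,v)$ continuous) shows $u(t)=u_0+\int_0^t F(\tau,u)\,d\tau$, hence a solution of (\ref{ACP}) with $u(0)=u_0$ after differentiation.

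For uniqueness I would run the same telescoping machinery on the difference $w=u-\tilde u$ of two solutions: using that both remain in $\overline{B(u_0,R)}$, the inequality $\|w(t)\|_\delta\le\int_0^{|t|}\frac{L}{\delta'-\delta}\|w(\tau)\|_{\delta'}\,|d\tau|$ iterated $n$ times with the equidistributed radii yields a bound of the same $q^n/n$ type, which tends to $0$, forcing $w\equiv 0$ on the common disk. I expect the main obstacle to be precisely the bookkeeping in the displayed induction — controlling the product of singular factors $\prod_j 1/(\delta_{j+1}-\delta_j)$ so that the accumulated $n^n/(1-\delta)^n$ growth is exactly tamed by the factorial produced by the iterated time integrals; everything else (preservation of holomorphy, the limit passage, and uniqueness) is routine once the geometric decay of the successive differences has been secured.
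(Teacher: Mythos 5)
The paper does not actually prove this lemma: it is quoted verbatim from Baouendi--Goulaouic and Ovsyannikov (\cite{BG,Ov1}) and used as a black box, so there is no in-paper argument to compare against. Judged on its own terms, your proposal is the standard and correct proof of the abstract Cauchy--Kovalevskaya theorem: Picard iteration on the nested scale, with the loss $1/(\delta'-\delta)$ at each stage absorbed by equidistributing the remaining gap. Your induction does close: the step from $n$ to $n+1$ produces the factor $(n+1)^n/(n\cdot n!)$, which is at most $2\,(n+1)^n/(n+1)!$, exactly matching the extra power of $2L$ in your ansatz, and Stirling's bound $n^n/n!\le e^n$ then yields the geometric series with ratio $q=2eL|t|/(1-\rho)$. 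The one point you should make explicit rather than defer is the ball containment at the \emph{intermediate} radius: when you estimate $u^{(n+1)}-u^{(n)}$ in $X_\delta$ for $|t|<T_0(1-\delta)$, hypothesis (2) requires $u^{(n)}(\tau),u^{(n-1)}(\tau)\in\overline{B(u_0,R)}$ in $X_{\delta'}$ with $1-\delta'=\tfrac{n}{n+1}(1-\delta)$, so the relevant ratio is $2eL|\tau|/(1-\delta')\le \tfrac{n+1}{n}\,2eLT_0\le 4eLT_0$ rather than $2eLT_0$; this is cured by halving $T_0$ once and for all, but it must be folded into the joint induction (difference bound plus containment) rather than checked afterwards. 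With that bookkeeping done, the existence, holomorphy of the locally uniform limit, passage to the limit in the integral equation via the Lipschitz property from (2), and the uniqueness iteration (extended to the full disk by the identity theorem for vector-valued holomorphic functions) are all sound.
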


This theorem was first proposed by Ovsyannikov in \cite{Ov1,Ov2,Ov3}. However, the original Ovsyannikov theorem becomes invalid for the Gervey class. Because this kind of spaces do not satisfy the condtion (2) of  the Ovsyannikov theorem. More precisely, for the Gevrey class, we see that
\begin{align}
 \sup\limits_{|t|<T} \|F(t,u)-F(t,v)\|_{\delta'}\leq \frac L{(\delta-\delta')^\sigma}\|u-v\|_{\delta},
    \end{align}
    with $\sigma\geq1.$ If $\sigma>1,$ the above inequality is weaker than the condition (2) because it is of nonlinear decay.
    In \cite{Luo-Y}, Luo and Yin established a new auxiliary function to obtain a generalized Ovsyannikov  theorem by modifying the proof of \cite{BHP}.

 \begin{lemm}\cite{Luo-Y}\label{ACK-Luo}
    Let $\{X_\delta\}_{0<\delta<1}$ be a scale of decreasing Banach spaces, namely, for any
$\delta'<\delta$ we have $X_{\delta}\subset X_{\delta'}$ and  $\|\cdot\|_{\delta'} \leq\|\cdot\|_\delta.$
Consider the Cauchy problem
\begin{align}\label{ACP1}
  \left\{
  \begin{array}{lll}
  \frac{\mathrm d u}{\mathrm d t}=F(t,u(t)),\\
  u|_{t=0}=u_0.
  \end{array}\right.
   \end{align}
Let $T,R>0,\sigma \geq1.$  For given $u_0\in X_1,$ assume that $F$ satisfies the following three conditions:
\begin{itemize}
 \item [(1)]
 If for $0<\delta'<\delta<1$ the function $t\mapsto u(t)$ is holomorphic in $|t|<T$ and continuous on $|t|<T$
 with values in $X_\delta$ and
 $$\sup\limits_{|t|<T} \|u(t)\|_{\delta}<R,$$
 then $t\mapsto F(t,u(t))$ is a holomorphic function on $|t|<T$ with values  in $X_{\delta'}.$
 \item [(2)]
  For any $0<\delta'<\delta<1$ and any $u,v\in \overline{B(u_0,R)} \subset X_\delta,$ there exists a positive constant $L$
  depending on $u_0$ and $R$ such that
  $$\sup\limits_{|t|<T} \|F(t,u)-F(t,v)\|_{\delta'}\leq \frac L{(\delta-\delta')^\sigma}\|u-v\|_{\delta}.$$
 \item [(3)]
 For any $0<\delta<1,$ there exists a positive constant $M$ depending  $u_0$ such that
 $$\sup\limits_{|t|<T} \|F(t,u_0)\|_{\delta}\leq \frac M{(1-\delta)^\sigma}.$$
 \end{itemize}
 Then there exists  a $T_0\in(0,T)$ and a unique solution $u(t)$ to the Cauchy problem (\ref{ACP1}), which for  every
 $\delta\in(0,1)$ is holomorphic  in $|t|<\frac{T_0(1-\delta)^\sigma}{2^\sigma-1}$ with values in $X_\delta.$
    \end{lemm}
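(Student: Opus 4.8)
The plan is to build the solution by Picard iteration but to measure convergence in a weighted norm that encodes both the loss factor $(1-\delta)^\sigma$ and the distance to the boundary of the time-disk; this weight plays the role of the ``auxiliary function'' that distinguishes the treatment from the classical Ovsyannikov theorem recalled above. Concretely, I would set $u^{(0)}(t)\equiv u_0$ and
\[
 u^{(n+1)}(t)=u_0+\int_0^t F(\tau,u^{(n)}(\tau))\,\mathrm d\tau .
\]
Condition~(1) is what keeps the scheme inside the analytic category: if $u^{(n)}$ is holomorphic in $t$ with values in $X_{\delta'}$ and stays in $\overline{B(u_0,R)}$, then $\tau\mapsto F(\tau,u^{(n)}(\tau))$ is holomorphic with values in $X_\delta$, so each $u^{(n+1)}$ is holomorphic as well. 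The base step uses condition~(3): since $u^{(1)}(t)-u_0=\int_0^t F(\tau,u_0)\,\mathrm d\tau$, one immediately gets $\|u^{(1)}(t)-u_0\|_\delta\le M|t|/(1-\delta)^\sigma$.

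The heart of the argument is the inductive control of the differences $w_n=u^{(n+1)}-u^{(n)}$. I would fix $a>0$, work on the region $\{|t|<a(1-\delta)^\sigma\}$, introduce an auxiliary weight $\phi(t,\delta)$ that decreases in $|t|$ and degenerates at $|t|=a(1-\delta)^\sigma$ (a suitable power of $a(1-\delta)^\sigma-|t|$), and track the quantity $\sup_{t,\delta}\phi(t,\delta)\,\|w_n(t)\|_\delta$. Applying condition~(2) at a \emph{time-dependent} intermediate radius $\delta'(\tau)\in(\delta,1)$ and integrating reduces the step $w_{n-1}\mapsto w_n$ to the scalar inequality
\[
 \int_0^{|t|}\frac{L}{(\delta'(\tau)-\delta)^\sigma}\,\frac{\mathrm d\tau}{\phi(\tau,\delta'(\tau))}\le\frac{\theta}{\phi(t,\delta)} .
\]
With $\phi$ and the moving radius chosen so as to split the budget $1-\delta$ geometrically (dyadic gaps $\propto(1-\delta)2^{-j}$), the constant $\theta$ can be made strictly less than $1$ as soon as $a\le T_0/(2^\sigma-1)$; the geometric sum $\sum_{j\ge1}2^{-j\sigma}=(2^\sigma-1)^{-1}$ is precisely what produces the denominator $2^\sigma-1$. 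This gives a genuine contraction, hence $\|w_n(t)\|_\delta\le \theta^{\,n}\,C/\phi(t,\delta)$, so $\sum_n w_n$ converges uniformly on compact subsets of $\{|t|<T_0(1-\delta)^\sigma/(2^\sigma-1)\}$ to a limit $u(t)$.

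To close the argument I would first take $T_0$ (equivalently $a$) small, and restrict to a slightly smaller sub-disk if needed, so that the base bound together with the geometric estimate yields $\|u^{(n)}(t)-u_0\|_\delta<R$ for all $n$; this legitimizes every application of condition~(2). Condition~(1) and the uniform convergence then show that the limit $u$ is holomorphic with values in $X_\delta$ on the stated disk and satisfies $u(t)=u_0+\int_0^t F(\tau,u(\tau))\,\mathrm d\tau$, i.e.\ solves the Cauchy problem. Uniqueness follows by feeding the difference of two solutions into condition~(2) and repeating the weighted contraction estimate: the difference vanishes at $t=0$, and $\theta<1$ forces it to vanish on the whole disk.

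The hard part will be the scalar estimate above, exactly because $\sigma>1$. If one imitated the proof of the classical theorem (the preceding lemma, where $\sigma=1$) by successive approximations with the regularity budget cut into $n$ equal pieces, the $n$-fold iterated loss would generate a factor comparable to $(n+1)^{\sigma(n+1)}/(n+1)!$, which for $\sigma>1$ grows faster than geometrically and makes the majorant series diverge for every $t\neq0$. This is the concrete manifestation of the nonlinear decay ($\sigma>1$) flagged before the statement. The role of the auxiliary function is to trade this divergent $n$-fold iteration of the loss for a \emph{single} application of condition~(2) against a boundary-adapted weight, so that each iteration costs only the constant factor $\theta<1$ instead of a factor growing with $n$; engineering $\phi$ and the moving radius $\delta'(\tau)$ so that $\theta<1$ holds on the largest possible disk is the crux, and it is there that the sharp constant $2^\sigma-1$ gets pinned down.
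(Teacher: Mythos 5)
Your proposal is essentially the argument of Luo--Yin that the paper cites for this lemma (the paper gives no proof of its own, but the auxiliary weight $(1-\delta)^\sigma\sqrt{1-|t|/(T(1-\delta)^\sigma)}$ and the moving intermediate radius $\delta(\tau)$ you describe are exactly what reappear in Section 4 as the $E_T$-norm and Lemma \ref{Luo3.7}). The structure --- Picard iteration, a boundary-degenerate weight replacing the $n$-fold splitting of the regularity budget, a single application of condition (2) at a time-dependent $\delta(\tau)$ giving a contraction factor $\theta<1$, with the dyadic gaps accounting for the constant $2^\sigma-1$ --- is the same route as the source proof, so your sketch is correct.
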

\begin{rema}\label{T-estimate}
 In fact, $T_0=\min \{\frac1{2^{2\sigma+4}L},\frac{(2^\sigma-1)R}{(2^\sigma-1)2^{2\sigma+3}LR+M}\},$
which gives a lower bound of the lifespan.
\end{rema}
\begin{rema}
The upper-bound condition $``\delta<1"$ is not essential. Indeed, we can replace $1$ by any other positive $\delta_0$ and
obtain the similar result.
\end{rema}
\begin{rema}
   If $\sigma=1$ Lemma \ref{ACK-Luo} reduces to the so called abstract Cauchy-Kovalevsky theorem.
   \end{rema}

To apply Lemma \ref{ACK-Luo}, we first introduce the Sobolev-Gevrey spaces and recall some basic properties.
\begin{defi}\cite{Foi-T}\label{G-defi}
 Let $s$ be a real number and $\sigma,\delta>0.$ A function $f\in G^\delta_{\sigma,s}(\mathbb R^d)$ if and
only if  $f\in C^\infty(\mathbb R^d)$ and satisfies
$$\|f\|_{G^\delta_{\sigma,s}(\mathbb R^d)}=\Big(\int_{\mathbb R^d}(1+|\xi|^2)^s e^{2\delta(1+|\xi|^2)^{\frac1{2\sigma}}}|\hat{f}(\xi)|^2\mathrm d \xi\Big)^\frac12<\infty.$$
A function $f\in \bar G^\delta_{\sigma,s}(\mathbb R^d)$ if and
only if  $f\in C^\infty(\mathbb R^d)$ and satisfies
$$\|f\|_{\bar G^\delta_{\sigma,s}(\mathbb R^d)}=\Big(\int_{\mathbb R^d}(1+|\xi|^2)^s e^{2\delta |\xi|^{\frac1{\sigma}}}|\hat{f}(\xi)|^2\mathrm d \xi\Big)^\frac12<\infty.$$
   \end{defi}
\begin{defi}\cite{L-O}
A function is of  Gevrey class $\sigma\geq1,$ if there exist $\delta>0, r\geq0$ such that $f\in G^\delta_{\sigma,r}(\mathbb R^d).$ Denote the the functions of Gevrey class $\sigma$
$$f\in G_\sigma(\mathbb R^d):=\bigcup_{\delta>0, r\in \mathbb R} G^\delta_{\sigma,r}(\mathbb R^d).$$
\end{defi}
\begin{rema}
 Denoting the Fourier multiplier $ \mathrm e^{\delta A^{\frac1{\sigma}}}$  and  $ \mathrm e^{\delta (-\delta)^{\frac1{2\sigma}}}$ by
  $$e^{\delta A^{\frac1{\sigma}}}f= \mathcal{F}^{-1}(\mathrm e^{\delta(1+|\xi|^2)^{\frac1{2\sigma}}}\hat f)
   \ \ \mathrm{and}\ \ \
e^{\delta (-\delta)^{\frac1{2\sigma}}}f= \mathcal{F}^{-1}(\mathrm e^{\delta|\xi|^{\frac1{\sigma}}}\hat f)$$
  respectively,
  we deduce that
   $\|f\|_{G^\delta_{\sigma,s}(\mathbb R^d)}=\|\mathrm e^{\delta A^{\frac1{\sigma}}}f\|_{H^s(\mathbb R^d)}$ and
    $\|f\|_{\bar G^\delta_{\sigma,s}(\mathbb R^d)}=\|\mathrm e^{\delta (-\delta)^{\frac1{2\sigma}}}f\|_{H^s(\mathbb R^d)}.$
Note that  for $\sigma\geq1,$
$$|\xi|^\frac1\sigma\leq (1+|\xi|^2)^\frac1{2\sigma}\leq 1+|\xi|^\frac1\sigma.$$
It follows that
$$\|f\|_{\bar G^\delta_{\sigma,s}}\leq \|f\|_{G^\delta_{\sigma,s}}\leq \mathrm e^\delta \|f\|_{\bar G^\delta_{\sigma,s}}.$$
   For  $0<\sigma<1$,  it is called ultra-analytic function. If $\sigma=1,$ it is usual analytic function and $\delta$ is called the radius of analyticity. If $\sigma>1,$ it is  the Gevrey class function.
   \end{rema}
 \begin{prop}\label{embedding sequence}
   Let $0<\delta'<\delta,\,0<\sigma'<\sigma$ and $s'<s.$ From Definition \ref{G-defi}, one can check that
   $G^\delta_{\sigma, s}(\mathbb R^d)\hookrightarrow G^{\delta'}_{\sigma, s}(\mathbb R^d),
   G^\delta_{\sigma', s}(\mathbb R^d)\hookrightarrow G^\delta_{\sigma, s}(\mathbb R^d)$ and
   $G^\delta_{\sigma, s}(\mathbb R^d)\hookrightarrow G^\delta_{\sigma, s'}(\mathbb R^d),$ with the embedding constants $C=1.$
    \end{prop}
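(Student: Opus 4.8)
The plan is to verify each of the three embeddings by a pointwise comparison of the integrands defining the squared norms in Definition \ref{G-defi}. Writing $\langle\xi\rangle:=(1+|\xi|^2)^{1/2}$, so that $(1+|\xi|^2)^s=\langle\xi\rangle^{2s}$ and $(1+|\xi|^2)^{1/(2\sigma)}=\langle\xi\rangle^{1/\sigma}$, the defining norm reads
\begin{align}
\|f\|_{G^\delta_{\sigma,s}(\mathbb R^d)}^2=\int_{\mathbb R^d}\langle\xi\rangle^{2s}\,e^{2\delta\langle\xi\rangle^{1/\sigma}}\,|\hat f(\xi)|^2\,\mathrm d\xi .
\end{align}
The crucial elementary fact I would use throughout is that $\langle\xi\rangle\ge1$ for every $\xi\in\mathbb R^d$. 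In each case the strategy is to exhibit a pointwise inequality between the two relevant integrands, then integrate and take square roots; since no multiplicative constant is ever introduced, this simultaneously proves containment of the spaces and gives the embedding constant $C=1$.

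For the first embedding $G^\delta_{\sigma,s}\hookrightarrow G^{\delta'}_{\sigma,s}$ with $\delta'<\delta$, only the exponential weight changes. Because $\langle\xi\rangle^{1/\sigma}\ge0$ and $\delta'<\delta$, one has $e^{2\delta'\langle\xi\rangle^{1/\sigma}}\le e^{2\delta\langle\xi\rangle^{1/\sigma}}$ pointwise, whence $\|f\|_{G^{\delta'}_{\sigma,s}}\le\|f\|_{G^\delta_{\sigma,s}}$. For the second embedding $G^\delta_{\sigma',s}\hookrightarrow G^\delta_{\sigma,s}$ with $\sigma'<\sigma$, again only the exponent is affected, now through the power $1/\sigma<1/\sigma'$; since $\langle\xi\rangle\ge1$, the map $p\mapsto\langle\xi\rangle^{p}$ is nondecreasing, so $\langle\xi\rangle^{1/\sigma}\le\langle\xi\rangle^{1/\sigma'}$ and therefore $e^{2\delta\langle\xi\rangle^{1/\sigma}}\le e^{2\delta\langle\xi\rangle^{1/\sigma'}}$, giving $\|f\|_{G^\delta_{\sigma,s}}\le\|f\|_{G^\delta_{\sigma',s}}$. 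For the third embedding $G^\delta_{\sigma,s}\hookrightarrow G^\delta_{\sigma,s'}$ with $s'<s$, the exponential weight is identical and only the polynomial factor changes; using $\langle\xi\rangle\ge1$ and $s'<s$ one has $\langle\xi\rangle^{2s'}\le\langle\xi\rangle^{2s}$ pointwise, so $\|f\|_{G^\delta_{\sigma,s'}}\le\|f\|_{G^\delta_{\sigma,s}}$.

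There is no substantive obstacle here: the statement reduces entirely to monotonicity of elementary functions. The only points deserving care are to keep straight, in each of the three cases, \emph{which} factor of the weight varies and in which direction the resulting inequality points, and to note that it is precisely the bound $\langle\xi\rangle\ge1$ (rather than merely $\langle\xi\rangle>0$) that makes the power and exponent monotonicities run the correct way for the second and third embeddings, while for the first embedding the weaker observation $\langle\xi\rangle^{1/\sigma}\ge0$ already suffices. Since every comparison is by a pointwise domination of the integrand with leading coefficient $1$, the embedding constant is $C=1$ in all three cases, as claimed.
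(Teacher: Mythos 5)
Your proof is correct and is exactly the direct pointwise comparison of integrands that the paper intends when it says the embeddings "can be checked" from Definition \ref{G-defi}; the key observation $(1+|\xi|^2)\ge 1$ is used in just the right places. Nothing further is needed.
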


    Along the similar computations of the proof of Proposition 2.4--2.5 in   \cite{Luo-Y}, we can obtain the following
    two propositions.
 \begin{prop}\label{gradient estimate}
   Let $s$ be a real number and $\sigma>0.$ Assume that $0<\delta'<\delta.$ Then we have
   $$\|\partial_xf\|_{G^{\delta'}_{\sigma,s}(\mathbb R)}
   \leq \frac{e^{-\sigma}\sigma^\sigma}{(\delta-\delta')^\sigma}
   \|f\|_{G^\delta_{\sigma,s}(\mathbb R)}.$$
    \end{prop}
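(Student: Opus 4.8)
The plan is to pass to the Fourier side, where both Gevrey--Sobolev norms are weighted $L^2$ norms of $\hat f$, and to reduce the claimed inequality to a pointwise bound on a Fourier multiplier that is then optimized by elementary one-variable calculus. First I would recall that $\widehat{\partial_x f}(\xi)=i\xi\,\hat f(\xi)$, so that by Definition \ref{G-defi},
$$\|\partial_x f\|_{G^{\delta'}_{\sigma,s}(\mathbb R)}^2=\int_{\mathbb R}(1+|\xi|^2)^s\,e^{2\delta'(1+|\xi|^2)^{\frac1{2\sigma}}}\,|\xi|^2\,|\hat f(\xi)|^2\,\mathrm d\xi.$$
Comparing this with $\|f\|_{G^\delta_{\sigma,s}(\mathbb R)}^2$, I note that the common factor $(1+|\xi|^2)^s|\hat f(\xi)|^2$ appears in both integrands, so it suffices to dominate, uniformly in $\xi$, the remaining multiplier. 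Writing $\eta=\delta-\delta'>0$ and taking square roots, it is enough to establish the pointwise bound
$$|\xi|\,e^{-\eta(1+|\xi|^2)^{\frac1{2\sigma}}}\leq\frac{e^{-\sigma}\sigma^\sigma}{\eta^\sigma}=:C$$
for every $\xi\in\mathbb R$; once this holds, multiplying back by the common factor and integrating yields the proposition at once.

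To prove the pointwise bound I would substitute $y=(1+|\xi|^2)^{\frac1{2\sigma}}\geq1$, so that $|\xi|\leq(1+|\xi|^2)^{\frac12}=y^\sigma$, and hence
$$|\xi|\,e^{-\eta(1+|\xi|^2)^{\frac1{2\sigma}}}\leq y^\sigma e^{-\eta y}\leq\sup_{y>0}y^\sigma e^{-\eta y}.$$
The function $h(y)=y^\sigma e^{-\eta y}$ has derivative $h'(y)=y^{\sigma-1}e^{-\eta y}(\sigma-\eta y)$, which vanishes only at $y_*=\sigma/\eta$ and changes sign there from positive to negative, so $h$ attains its global maximum at $y_*$. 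Evaluating gives $h(y_*)=(\sigma/\eta)^\sigma e^{-\sigma}=e^{-\sigma}\sigma^\sigma/\eta^\sigma=C$, which is exactly the constant claimed.

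The argument involves no real obstacle; the only point that requires care is the reduction to the scalar optimization, namely using $|\xi|\leq(1+|\xi|^2)^{1/2}$ so that one maximizes the tractable $y^\sigma e^{-\eta y}$ rather than $|\xi|\,e^{-\eta(1+|\xi|^2)^{1/2\sigma}}$ directly. Since the critical point $y_*=\sigma/\eta$ may fall below $1$ when $\eta$ is large, I would remark that bounding the supremum over $y\geq1$ by the supremum over all $y>0$ can only enlarge the constant, so the stated inequality holds for all $0<\delta'<\delta$ and all $\sigma>0$ with no further case distinction. This is precisely the computation carried out for Proposition~2.4 in \cite{Luo-Y}, to which the statement refers.
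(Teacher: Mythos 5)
Your proof is correct and is exactly the computation the paper delegates to Proposition 2.4 of \cite{Luo-Y}: reduce to the pointwise multiplier bound $|\xi|e^{-(\delta-\delta')(1+|\xi|^2)^{1/(2\sigma)}}\leq e^{-\sigma}\sigma^\sigma/(\delta-\delta')^\sigma$ and optimize $y^\sigma e^{-\eta y}$. Nothing further is needed.
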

 \begin{prop}\label{productlaws}
 Let $s>\frac12, \sigma\geq1$ and $\delta>0.$
 Then $G^\delta_{\sigma,s}(\mathbb R)$ is an algebra. Moreover, there exists a constant $C_s$ such that
 $$\|fg\|_{G^\delta_{\sigma,s}(\mathbb R)}
 \leq C_s\|f\|_{G^\delta_{\sigma,s}(\mathbb R)}\|g\|_{G^\delta_{\sigma,s}(\mathbb R)}.$$
    \end{prop}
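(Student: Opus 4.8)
The plan is to transfer the estimate to the ordinary Sobolev space $H^s(\mathbb R)$ by means of the identity recorded in the remark above, namely $\|f\|_{G^\delta_{\sigma,s}}=\|e^{\delta A^{1/\sigma}}f\|_{H^s}$, and then run the classical ``$H^s$ is an algebra for $s>1/2$'' argument, with the Gevrey exponential weight factored out of the convolution. Write $\langle\xi\rangle=(1+|\xi|^2)^{1/2}$, so that the weight defining $\|\cdot\|_{G^\delta_{\sigma,s}}$ is $\langle\xi\rangle^s e^{\delta\langle\xi\rangle^{1/\sigma}}$. Since $\widehat{fg}$ equals, up to a fixed constant, the convolution $\hat f*\hat g$, the whole matter reduces to bounding $\langle\xi\rangle^s e^{\delta\langle\xi\rangle^{1/\sigma}}(\hat f*\hat g)(\xi)$ in $L^2_\xi$.

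The first and decisive step is to establish subadditivity of the Gevrey weight: for $\sigma\ge1$ and all $\xi,\eta\in\mathbb R$,
$$(1+|\xi|^2)^{\frac1{2\sigma}}\le(1+|\xi-\eta|^2)^{\frac1{2\sigma}}+(1+|\eta|^2)^{\frac1{2\sigma}}.$$
This follows by combining the elementary inequality $\langle\xi\rangle\le\langle\xi-\eta\rangle+\langle\eta\rangle$ with the subadditivity of $t\mapsto t^{1/\sigma}$ on $[0,\infty)$, which holds precisely because $0<1/\sigma\le1$. Exponentiating, the weight factorizes pointwise inside the convolution integral:
$$e^{\delta\langle\xi\rangle^{1/\sigma}}\le e^{\delta\langle\xi-\eta\rangle^{1/\sigma}}\,e^{\delta\langle\eta\rangle^{1/\sigma}}.$$
Setting $u:=e^{\delta A^{1/\sigma}}f$ and $v:=e^{\delta A^{1/\sigma}}g$, so that $\|u\|_{H^s}=\|f\|_{G^\delta_{\sigma,s}}$ and $\|v\|_{H^s}=\|g\|_{G^\delta_{\sigma,s}}$, this estimate bounds $e^{\delta\langle\xi\rangle^{1/\sigma}}|\hat f*\hat g|(\xi)$ pointwise by $(|\hat u|*|\hat v|)(\xi)$. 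The exponential is thereby absorbed and we are left with only the polynomial weight $\langle\xi\rangle^s$ acting on a convolution of the nonnegative functions $U:=|\hat u|$ and $V:=|\hat v|$.

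Next I would carry out the standard Sobolev-algebra computation on $U$ and $V$. Using $\langle\xi\rangle^s\le 2^s(\langle\xi-\eta\rangle^s+\langle\eta\rangle^s)$, valid for $s\ge0$, I split
$$\langle\xi\rangle^s(U*V)\le 2^s\big[(\langle\cdot\rangle^s U)*V+U*(\langle\cdot\rangle^s V)\big],$$
take the $L^2_\xi$ norm, and invoke Young's inequality $\|a*b\|_{L^2}\le\|a\|_{L^1}\|b\|_{L^2}$. This reduces matters to the factors $\|U\|_{L^1}$ and $\|V\|_{L^1}$, which are handled by Cauchy--Schwarz:
$$\|U\|_{L^1}=\int\langle\xi\rangle^{-s}\big(\langle\xi\rangle^s U\big)\,\mathrm d\xi\le\|\langle\cdot\rangle^{-s}\|_{L^2}\,\|\langle\cdot\rangle^s U\|_{L^2}=C_s\|f\|_{G^\delta_{\sigma,s}},$$
where finiteness of $\|\langle\cdot\rangle^{-s}\|_{L^2(\mathbb R)}$ is exactly the hypothesis $s>\frac12$. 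Assembling the pieces gives $\|fg\|_{G^\delta_{\sigma,s}}\le C_s\|f\|_{G^\delta_{\sigma,s}}\|g\|_{G^\delta_{\sigma,s}}$, and the algebra property is immediate.

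The hard part — indeed essentially the only nonroutine point — is the weight-factorization step, i.e. the subadditivity inequality for $(1+|\xi|^2)^{1/(2\sigma)}$. This is where the hypothesis $\sigma\ge1$ is used: it is precisely the concavity of $t\mapsto t^{1/\sigma}$ that lets the Gevrey exponential decouple across the convolution and reduces everything to the classical case; for $\sigma<1$ this fails and a different argument would be required. The condition $s>\frac12$ plays only its usual role, guaranteeing $\langle\cdot\rangle^{-s}\in L^2(\mathbb R)$.
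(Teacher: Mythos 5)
Your proof is correct, and it is essentially the standard argument that the paper itself omits by deferring to Propositions 2.4--2.5 of Luo--Yin: factor the Gevrey weight through the convolution via the subadditivity of $\xi\mapsto(1+|\xi|^2)^{1/(2\sigma)}$ (which needs $\sigma\ge1$, and which the paper reuses explicitly in the proof of Lemma \ref{cha-gu-ji}), then run the classical $H^s$-algebra estimate with Young's inequality and $\langle\cdot\rangle^{-s}\in L^2$ for $s>\tfrac12$. No gaps; the roles you assign to the hypotheses $\sigma\ge1$ and $s>\tfrac12$ are exactly right.
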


 Now we can state some known results of the system (\ref{1}), which will be used in sequel.
 \begin{lemm}\cite{He-Yin-new}\label{global-Sobolev1}
  Let $a=2, s=[s]\geq2, q>s+\frac12 $, then the solution to (\ref{B})  with the initial data $u_0\in H^q$  exists globally in time.
   \end{lemm}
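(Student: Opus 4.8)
The plan is to combine the local theory with an a priori bound supplied by a conserved quantity, the classical strategy for Camassa--Holm--type equations. First I would invoke the local well-posedness of (\ref{B}) in $H^q$ with $q>s+\frac12$ (the very result of \cite{He-Yin-new}): for $u_0\in H^q$ there is a maximal time $T^\ast>0$ and a unique solution $u\in C([0,T^\ast);H^q)$, together with a continuation (blow-up) criterion of the form
$$T^\ast<\infty \quad\Longrightarrow\quad \int_0^{T^\ast}\|u_x(\tau)\|_{L^\infty}\,\mathrm d\tau=+\infty .$$
The mechanism behind such a criterion is the standard energy estimate for the transport-type structure of the equation: applying $(1-\partial_x^2)^s$-weighted estimates and commutator bounds to the evolution of $u$, one gets an inequality of the type $\frac{\mathrm d}{\mathrm d t}\|u\|_{H^q}^2\leq C\bigl(1+\|u_x\|_{L^\infty}\bigr)\|u\|_{H^q}^2$, so that $\|u(t)\|_{H^q}$ cannot leave $H^q$ as long as $\|u_x\|_{L^\infty}$ is integrable in time.

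The heart of the matter is the conservation law available precisely when $a=2$. Since $m=(1-\partial_x^2)^s u$ is a self-adjoint Fourier multiplier, $\int_{\mathbb R} u\,\partial_t m\,\mathrm d x=\int_{\mathbb R} m\,\partial_t u\,\mathrm d x$, whence $\frac{\mathrm d}{\mathrm d t}\int_{\mathbb R} mu\,\mathrm d x=2\int_{\mathbb R} u\,m_t\,\mathrm d x$. Substituting $m_t=-um_x-au_x m$ and integrating the first term by parts, $-\int_{\mathbb R}u^2 m_x\,\mathrm d x=2\int_{\mathbb R}uu_x m\,\mathrm d x$, gives $\int_{\mathbb R} u\,m_t\,\mathrm d x=(2-a)\int_{\mathbb R}uu_x m\,\mathrm d x$, which vanishes exactly for $a=2$. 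Because $\int_{\mathbb R}mu\,\mathrm d x=\int_{\mathbb R}u\,(1-\partial_x^2)^s u\,\mathrm d x=\|u\|_{H^s}^2$, this yields $\|u(t)\|_{H^s}=\|u_0\|_{H^s}$ for every $t$ in the existence interval. At the regularity $H^q\subset H^s$ (guaranteed by $q>s+\frac12$) all these integrations by parts are justified.

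Finally I would close the argument using the size hypothesis $s\geq 2$. Since $s>\frac32$, the Sobolev embedding $H^s(\mathbb R)\hookrightarrow W^{1,\infty}(\mathbb R)$ gives $\|u_x(t)\|_{L^\infty}\leq C\|u(t)\|_{H^s}=C\|u_0\|_{H^s}$, a bound uniform in $t$. If $T^\ast<\infty$ this forces $\int_0^{T^\ast}\|u_x\|_{L^\infty}\,\mathrm d\tau\leq C\|u_0\|_{H^s}\,T^\ast<\infty$, contradicting the blow-up criterion; equivalently, inserting the bound into the energy inequality and applying Gronwall yields $\|u(t)\|_{H^q}^2\leq\|u_0\|_{H^q}^2\exp\bigl(C(1+C\|u_0\|_{H^s})t\bigr)$, finite on every bounded interval. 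Hence $T^\ast=+\infty$ and the solution is global.

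The step I expect to be the main obstacle is not the conservation law, which is a short computation, but the rigorous justification of the continuation criterion, i.e.\ the commutator and product estimates in $H^q$ for the equation governing $u$, where the nonlinearity involves the order-$2s$ operator $m=(1-\partial_x^2)^s u$. Here the integrality constraint $s=[s]$ is what makes $(1-\partial_x^2)^s$ a genuine differential operator with polynomial symbol, keeping these commutators local and manageable, and one must check that $q>s+\frac12$ (rather than the naive transport threshold $q>\frac32$) is exactly the regularity needed for the products $u m$ and $u_x m$ and the associated energy estimate to close.
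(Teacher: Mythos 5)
This lemma is imported from \cite{He-Yin-new}; the present paper states it without proof, so there is no in-text argument to compare against and I can only judge your proposal on its own terms. Your architecture --- local well-posedness in $H^q$, a continuation criterion, conservation of $\int_{\mathbb R} mu\,\mathrm dx=\|u\|_{H^s}^2$ (which your computation correctly shows holds exactly when $a=2$), and the embedding $H^s\hookrightarrow W^{1,\infty}$ for $s\geq 2$ --- is the standard route for this family and is surely the one intended by the cited reference.

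The one step you should not take for granted is your assertion that the continuation criterion involves only $\|u_x\|_{L^\infty}$. For $s=1$ that is what one gets, because the nonlocal term $(1-\partial_x^2)^{-1}\partial_x(u^2+\tfrac12u_x^2)$ admits a tame estimate closing with $\|u\|_{W^{1,\infty}}$ alone. Here, however, $K(u,u)$ contains terms such as $(1-\partial_x^2)^{-s}\partial_x\big[(\partial_x^su)^2\big]$, and the naive tame estimate $\|(\partial_x^su)^2\|_{H^{q-s}}\leq C\|\partial_x^su\|_{L^\infty}\|u\|_{H^q}$ would force $\|u\|_{W^{s,\infty}}$ into the criterion --- a quantity \emph{not} controlled by the conserved $H^s$ norm, which only bounds $\|\partial_x^ju\|_{L^\infty}$ for $j\leq s-1$. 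To close the Gronwall argument one must instead exploit the full smoothing of $(1-\partial_x^2)^{-s}$ and estimate $\|(\partial_x^su)^2\|_{H^{q-2s+1}}$ at the genuinely negative index $q-2s+1$, e.g. via the bilinear estimate $H^{q-s}\times H^{0}\to H^{q-2s+1}$ (legitimate since $q-s>\tfrac12$, $q-2s+1\leq q-s-\tfrac12$ for $s\geq\tfrac32$, and $q-2s+1\leq 0$ for $q\leq 2s-1$), which gives the bound $C\|u\|_{H^s}\|u\|_{H^q}$; for $q>2s-1$ one then bootstraps from the already-globalized lower norm. So your conclusion and overall strategy are correct, but the sentence ``the criterion is $\int\|u_x\|_{L^\infty}\,\mathrm d\tau$'' glosses over precisely the place where the hypotheses $s=[s]\geq2$ and $q>s+\tfrac12$ do real work; note that the companion result for fractional $s$ (Lemma \ref{1-global-11}) demands $q\geq 2s$ for essentially this reason.
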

   \begin{lemm}\cite{He-Yin-new}\label{1-global-11}
  Suppose $s>\frac32.$  If $ a=2$ and $q\geq 2s,$  then the solution to (\ref{B})  with the initial data $u_0\in H^q$  exists globally in time.
   \end{lemm}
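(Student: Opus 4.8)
The plan is to combine a conservation law with the continuation (blow-up) criterion coming from the local theory for (\ref{B}): local well-posedness in $H^q$ with $q\ge 2s$ produces a unique maximal solution $u\in C([0,T^*);H^q)$, and it suffices to rule out finite-time blow-up of $\|u(t)\|_{H^q}$. The decisive gain over the classical Camassa--Holm case $s=1$ is that here the conserved quantity lives in $H^s$ with $s>\frac32$, which is already strong enough to control the Lipschitz seminorm of $u$; this is exactly what prevents wave breaking and forces globality.

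First I would show that $\|u(t)\|_{H^s}$ is conserved. Writing $\Lambda^{2}=1-\partial_x^2$, so that $m=\Lambda^{2s}u$ and $\langle u,m\rangle_{L^2}=\|u\|_{H^s}^2$, and using that $\Lambda^{2s}$ is self-adjoint, one computes
\begin{equation}
\frac{\mathrm d}{\mathrm dt}\|u\|_{H^s}^2
=\frac{\mathrm d}{\mathrm dt}\langle u,m\rangle_{L^2}
=2\langle u,m_t\rangle_{L^2}
=-2\langle u^2,m_x\rangle_{L^2}-4\langle uu_x,m\rangle_{L^2}.
\end{equation}
Integrating by parts in the first term gives $-\langle u^2,m_x\rangle_{L^2}=2\langle uu_x,m\rangle_{L^2}$, so the two contributions cancel and $\|u(t)\|_{H^s}=\|u_0\|_{H^s}$ throughout the lifespan. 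This is precisely where the choice $a=2$ enters: for a general $a$ the coefficients would not match and the terms would not cancel. At regularity $q\ge 2s$ one has $m\in H^{q-2s}\subset L^2$ and $u^2\in H^q$, so every duality pairing and integration by parts above is legitimate (otherwise one argues by density from smooth solutions).

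Next, since $s>\frac32$ we have $s-1>\frac12$, hence $H^{s-1}(\mathbb R)\hookrightarrow L^\infty(\mathbb R)$ yields
\begin{equation}
\|u_x(t)\|_{L^\infty}\le C\,\|u_x(t)\|_{H^{s-1}}\le C\,\|u(t)\|_{H^s}=C\,\|u_0\|_{H^s},
\end{equation}
a bound uniform in time. To pass to the full $H^q$ norm I would run an energy estimate on the transport equation $m_t+um_x=-2u_xm$ in $H^{q-2s}$ (admissible because $q-2s\ge0$): the transport term produces, after integration by parts, a factor $\|u_x\|_{L^\infty}$, while the commutator and the forcing $u_xm$ are treated by Kato--Ponce commutator and product estimates (Proposition \ref{productlaws}). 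Inserting the uniform bound on $\|u_x\|_{L^\infty}$ and invoking Gr\"onwall's inequality gives $\|m(t)\|_{H^{q-2s}}=\|u(t)\|_{H^q}\le \|u_0\|_{H^q}\exp(Ct\|u_0\|_{H^s})$, finite on every bounded interval, whence $T^*=\infty$ by the continuation criterion.

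The step I expect to be the main obstacle is closing this higher-order energy estimate so that its right-hand side involves only $\|u_x\|_{L^\infty}$ and the conserved norm $\|u\|_{H^s}$. For $q$ near $2s$ the commutator $[\,\Lambda^{q-2s},u\,]m_x$ and the product $u_xm$ are bounded directly, but for large $q$ the naive estimates call for $\|u\|_{H^{q-2s+1}}$, which is not covered by $\|u\|_{H^s}$. The clean way around this is a two-step scheme, paralleling the integer case of Lemma \ref{global-Sobolev1}: first establish global existence at the base level $q=2s$ from the $L^2$ estimate on $m$ alone, where
\begin{equation}
\frac{\mathrm d}{\mathrm dt}\|m\|_{L^2}^2=-3\langle u_x,m^2\rangle_{L^2}\le 3\,\|u_x\|_{L^\infty}\,\|m\|_{L^2}^2
\end{equation}
closes immediately; then propagate the extra regularity $q>2s$ along this already global solution using a blow-up criterion phrased purely through $\int_0^{T^*}\|u_x\|_{L^\infty}\,\mathrm dt$, which the conserved $H^s$ bound renders finite on every finite interval.
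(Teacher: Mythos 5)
This lemma is not proved in the paper at all: it is imported verbatim from the companion work \cite{He-Yin-new}, so there is no in-paper argument to compare against. That said, your reconstruction is the standard and correct route, and it matches what the cited reference does: the identity $\tfrac{\mathrm d}{\mathrm dt}\langle u,m\rangle_{L^2}=(4-2a)\int uu_xm\,\mathrm dx$ shows that $\|u\|_{H^s}$ is conserved precisely when $a=2$; the hypothesis $s>\tfrac32$ then gives a time-uniform bound on $\|u_x\|_{L^\infty}$ via $H^{s-1}\hookrightarrow L^\infty$; the hypothesis $q\ge 2s$ puts $m$ in $L^2$ so that $\tfrac{\mathrm d}{\mathrm dt}\|m\|_{L^2}^2=-3\int u_xm^2\,\mathrm dx$ closes by Gr\"onwall at the base level $q=2s$; and higher $q$ is handled by the continuation criterion. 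The only point you leave open is the one you flag yourself: for $q>2s$ the naive Kato--Ponce commutator bound for $[\Lambda^{q-2s},u]m_x$ produces a term $\|u\|_{H^{q-2s}}\|m_x\|_{L^\infty}$ that is not controlled by $\|u_x\|_{L^\infty}$ and the conserved norm alone, so one genuinely needs either the bootstrap through intermediate regularities or a blow-up criterion for (\ref{B}) stated purely in terms of $\int_0^{T^*}\|u_x\|_{L^\infty}\,\mathrm dt$; such a criterion is established in \cite{He-Yin-new} but is not available inside this paper, so strictly speaking your proof rests on the same external input that the authors cite.
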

   \begin{lemm}\cite{He-Yin-new}\label{2global1}
  Suppose $s=[s]=k\geq 2,\, a=2, \kappa\geq 0,\,  q>s+\frac12$ and  $q_1$ satisfies the  following condition  \begin{align}\label{condition3}
\frac12< q_1\leq q-1\leq q_1+2s-2.
     \end{align}
    Given the initial data $(u_0,\rho_0)\in H^q\times  H^{q_1}$, then the solution $(u,\rho)$ to  (\ref{1}) exists globally in time, namely, $(u,\rho)\in C([0,\infty); H^q\times  H^{q_1}).$
 \end{lemm}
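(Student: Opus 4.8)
The strategy is the classical one for Camassa--Holm--type systems: combine local well-posedness with a continuation (blow-up) criterion and a coercive conservation law, the last of these being available exactly because $a=2$. First I would invoke the local theory for (\ref{1}) in $H^q\times H^{q_1}$ (as developed in \cite{He-Yin-new}): under $q>s+\frac12$ and the balance (\ref{condition3}) there is a unique maximal solution $(u,\rho)\in C([0,T^{*});H^q\times H^{q_1})$, together with a continuation principle of the form
\begin{align*}
 T^{*}<\infty \ \Longrightarrow\ \int_0^{T^{*}}\big(\|u_x(\tau)\|_{L^\infty}+\|\rho(\tau)\|_{L^\infty}\big)\,\mathrm d\tau=\infty .
\end{align*}
Condition (\ref{condition3}) is precisely what makes the underlying energy estimate close: $q-1\le q_1+2s-2$ forces the quadratic source $\kappa\rho\rho_x$ into $H^{q-2s}$, the space in which $m_t$ must live, while $q_1\le q-1$ keeps $u$ regular enough ($u\in H^{q_1+1}\subset H^q$) to transport $\rho$ in $H^{q_1}$; and since $q_1>\frac12$ only $\|\rho\|_{L^\infty}$ (not $\|\rho_x\|_{L^\infty}$) enters the transport estimate for $\rho$. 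It therefore suffices to bound $\|u_x\|_{L^\infty}$ and $\|\rho\|_{L^\infty}$ on every finite subinterval of $[0,T^{*})$.

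The heart of the matter is the conservation law, valid for $a=2$ and any $s$:
\begin{align*}
 E(t):=\|u(t)\|_{H^s}^2+\kappa\|\rho(t)\|_{L^2}^2=E(0),\qquad t\in[0,T^{*}).
\end{align*}
To prove it I would use that $(1-\partial_x^2)^s$ is self-adjoint and positive, so $\int mu\,\mathrm dx=\|u\|_{H^s}^2$ and $\frac{\mathrm d}{\mathrm dt}\int mu\,\mathrm dx=2\int m_t u\,\mathrm dx$. Substituting $m_t=-um_x-2mu_x+\alpha u_x-\kappa\rho\rho_x$ and integrating by parts, the transport contribution $-2\int u^2m_x=4\int uu_xm$ cancels the inertia contribution $-4\int mu_xu$ (this cancellation is exactly the identity $a=2$), the vorticity term $2\alpha\int u_xu$ vanishes, and only $-2\kappa\int u\rho\rho_x=\kappa\int u_x\rho^2$ remains; meanwhile the second equation gives $\frac{\mathrm d}{\mathrm dt}\int\rho^2\,\mathrm dx=-\int u_x\rho^2\,\mathrm dx$, so the two survivors cancel in $E'(t)$. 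Because $\kappa\ge0$, both summands of $E$ are non-negative, whence $\|u(t)\|_{H^s}\le\sqrt{E(0)}$ uniformly in time.

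Now the two continuation quantities are controlled. Since $s=k\ge 2>\frac32$, the embedding $H^s(\mathbb R)\hookrightarrow W^{1,\infty}(\mathbb R)$ yields $\|u_x(t)\|_{L^\infty}\le C\|u(t)\|_{H^s}\le C\sqrt{E(0)}$, a bound uniform in $t$. For $\rho$ I would integrate the second equation along the flow $\dot X(t)=u(t,X(t))$, where it reads $\frac{\mathrm d}{\mathrm dt}\rho(t,X(t))=-u_x(t,X(t))\,\rho(t,X(t))$, giving $\|\rho(t)\|_{L^\infty}\le\|\rho_0\|_{L^\infty}\exp\!\big(\int_0^t\|u_x(\tau)\|_{L^\infty}\,\mathrm d\tau\big)\le\|\rho_0\|_{L^\infty}e^{C\sqrt{E(0)}\,t}$, finite on every finite interval (note $\rho_0\in H^{q_1}\hookrightarrow L^\infty$ as $q_1>\frac12$). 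Hence $\int_0^{T}\big(\|u_x\|_{L^\infty}+\|\rho\|_{L^\infty}\big)\,\mathrm d\tau<\infty$ for each finite $T$, the continuation criterion is never triggered, and so $T^{*}=\infty$, i.e.\ $(u,\rho)\in C([0,\infty);H^q\times H^{q_1})$. This extends the one-component global results (Lemmas \ref{global-Sobolev1} and \ref{1-global-11}) to the coupled system.

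The main obstacle is not this final continuation step, which is soft once the conserved energy is in hand, but the two ingredients feeding it. The first is rigor in the conservation law: the integrations by parts above are formal at regularity $H^q\times H^{q_1}$ and should be justified by mollifying the data, or by proving the identity for smooth solutions and passing to the limit via continuous dependence. The second, and more delicate, is ensuring the continuation criterion really is governed only by the $L^\infty$-type quantities we have bounded: the top-order estimate for $m$ relies on Kato--Ponce commutator bounds for the term $\Lambda^{-2s}[\Lambda^{2s},u]u_x$ (writing $\Lambda^2=1-\partial_x^2$) that appears when the $m$-equation is recast as a transport equation for $u$, and one must check these estimates close with a multiplier no worse than $\|u\|_{W^{1,\infty}}$ (together with $\|\rho\|_{L^\infty}$ in the $\rho$-transport estimate). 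Even if a few higher $L^\infty$-derivatives of $u$ were to appear, the argument still survives: since $s=k\ge2$ gives $H^s\hookrightarrow C^{k-1}$, all of $\|\partial_x^j u\|_{L^\infty}$ for $0\le j\le k-1$ are dominated by the conserved $\|u\|_{H^s}$.
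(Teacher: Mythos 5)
The paper does not prove this lemma at all: it is imported verbatim from \cite{He-Yin-new}, so there is no in-paper argument to compare against. Your proposal follows the standard route that such a citation presupposes — local well-posedness plus a continuation criterion in $\|u_x\|_{L^\infty}$ and $\|\rho\|_{L^\infty}$, closed by the $a=2$ conservation law $\|u\|_{H^s}^2+\kappa\|\rho\|_{L^2}^2$ and the embedding $H^s\hookrightarrow W^{1,\infty}$ valid for $s\ge 2$ — and your cancellation computation and the use of $\kappa\ge 0$ for coercivity both check out; the only element taken on faith is the precise form of the blow-up criterion, which you correctly attribute to the local theory of \cite{He-Yin-new} rather than deriving.
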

 \begin{lemm}\cite{He-Yin-new}\label{2global2}
  Suppose $a=2, \, \kappa\geq 0,\,  s>\frac32$,\,  $q\geq 2s$  and  $q_1$ satisfies the  condition (\ref{condition3}).
  Given the initial data $(u_0,\rho_0)\in H^q\times  H^{q_1}$, then the solution $(u,\rho)$ to  (\ref{1}) exists globally in time, namely, $(u,\rho)\in C([0,\infty); H^q\times  H^{q_1}).$
 \end{lemm}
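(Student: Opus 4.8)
The plan is to combine a conservation law coming from the Hamiltonian structure at $a=2$ with a blow-up criterion from the local theory, exactly in the spirit of the one-component result (Lemma \ref{1-global-11}) and the integer two-component result (Lemma \ref{2global1}). First I would invoke the local well-posedness of (\ref{1}) in $H^q\times H^{q_1}$ from \cite{He-Yin-new}, valid under $a=2,\ s>\frac32,\ q\ge 2s$ and (\ref{condition3}): this yields a unique maximal solution $(u,\rho)\in C([0,T^\ast);H^q\times H^{q_1})$ together with a blow-up criterion asserting that if $T^\ast<\infty$ then necessarily $\int_0^{T^\ast}\bigl(\|u_x(\tau)\|_{L^\infty}+\|\rho(\tau)\|_{L^\infty}\bigr)\,\mathrm d\tau=\infty$. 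Thus global existence reduces to showing that these two quantities stay integrable on every bounded time interval.

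The key step is to produce the conserved energy. Writing $\|u\|_{H^s}^2=\int_{\mathbb R}u\,m\,\mathrm dx$ (since $m=(1-\partial_x^2)^su$ and the multiplier is self-adjoint) and using $u_t=(1-\partial_x^2)^{-s}m_t$, I get $\frac{\mathrm d}{\mathrm dt}\|u\|_{H^s}^2=2\int u\,m_t\,\mathrm dx$. Substituting the first equation of (\ref{1}) and integrating by parts, the genuinely nonlinear contribution reduces to $(2-a)\int u u_x m\,\mathrm dx$, which vanishes \emph{precisely} when $a=2$; the vorticity term $\alpha\int uu_x\,\mathrm dx$ also vanishes by integration by parts, leaving $\frac{\mathrm d}{\mathrm dt}\|u\|_{H^s}^2=\kappa\int u_x\rho^2\,\mathrm dx$. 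A parallel computation on the second equation gives $\frac{\mathrm d}{\mathrm dt}\|\rho\|_{L^2}^2=-\int u_x\rho^2\,\mathrm dx$ when $a=2$. Adding them with weight $\kappa$ yields conservation of
\[
E(t):=\|u(t)\|_{H^s}^2+\kappa\|\rho(t)\|_{L^2}^2 .
\]
Since $\kappa\ge0$, this forces $\|u(t)\|_{H^s}^2\le E(0)$ for all $t\in[0,T^\ast)$. All the integrations by parts are legitimate because $q\ge2s>3$ gives $u,u_x\in L^\infty$ and $m\in L^2$.

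With the a priori $H^s$-bound in hand, the hypothesis $s>\frac32$ does the rest: the embedding $H^s(\mathbb R)\hookrightarrow W^{1,\infty}(\mathbb R)$ gives $\|u_x(t)\|_{L^\infty}\le C\|u(t)\|_{H^s}\le C\sqrt{E(0)}$ uniformly in $t$. To control $\|\rho\|_{L^\infty}$ I would integrate the second equation along the flow $\dot X=u$, obtaining $|\rho(t,X(t))|=|\rho_0|\exp\!\bigl(-\int_0^t u_x\,\mathrm d\tau\bigr)\le\|\rho_0\|_{L^\infty}\exp\!\bigl(\int_0^t\|u_x\|_{L^\infty}\,\mathrm d\tau\bigr)$, which is finite on each bounded interval. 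Hence $\int_0^{T}\bigl(\|u_x\|_{L^\infty}+\|\rho\|_{L^\infty}\bigr)\,\mathrm d\tau<\infty$ for every $T<\infty$, contradicting the blow-up criterion unless $T^\ast=\infty$; this yields $(u,\rho)\in C([0,\infty);H^q\times H^{q_1})$.

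The main obstacle is not the conservation law—which is a short computation once one notices that $a=2$ annihilates $(2-a)\int uu_xm$—but the blow-up criterion and, underlying it, the $H^q\times H^{q_1}$ a priori estimate whose closure is exactly what forces the structural conditions $q\ge2s$ and (\ref{condition3}). In that estimate one transports $\rho$ by $u$, which requires $u\in H^{q_1+1}$, i.e.\ $q_1\le q-1$; and one must place the coupling term $(1-\partial_x^2)^{-s}(\rho\rho_x)$, of regularity $H^{q_1-1+2s}$, into $H^q$, which requires $q\le q_1+2s-1$, i.e.\ the upper bound $q-1\le q_1+2s-2$. Finally, $q\ge2s$ is what keeps $m\in H^{q-2s}\subseteq L^2$ so that the commutator estimates for the nonlocal operator $(1-\partial_x^2)^s$ close. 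If a clean blow-up criterion in $\|u_x\|_{L^\infty}$ (plus $\|\rho\|_{L^\infty}$) were not available from \cite{He-Yin-new}, I would instead run the Kato–Ponce commutator energy estimates for $\|u\|_{H^q}$ and $\|\rho\|_{H^{q_1}}$ directly and feed the uniform bound on $\|u_x\|_{L^\infty}$ through Gronwall's inequality; this is where the regularity bookkeeping under $q\ge2s$ and (\ref{condition3}) becomes delicate.
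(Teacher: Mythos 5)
The paper does not prove this lemma at all: it is imported verbatim from \cite{He-Yin-new} as a known global-existence result, so there is no in-paper argument to compare against. Your reconstruction is correct and is the standard route one would expect that reference to take: the conservation of $E(t)=\|u\|_{H^s}^2+\kappa\|\rho\|_{L^2}^2$ at $a=2$ (your computation of the cancellation $(2-a)\int uu_xm$ and of $\frac{\mathrm d}{\mathrm dt}\|\rho\|_{L^2}^2=-\int u_x\rho^2$ checks out), the resulting uniform bound on $\|u_x\|_{L^\infty}$ via $H^s\hookrightarrow W^{1,\infty}$ for $s>\frac32$, the exponential control of $\|\rho\|_{L^\infty}$ along characteristics, and closure through a blow-up criterion or a direct Kato--Ponce/Gronwall estimate under $q\ge 2s$ and (\ref{condition3}). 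The only caveat is that the precise form of the blow-up criterion is taken on faith from \cite{He-Yin-new}, but your fallback of running the $H^q\times H^{q_1}$ energy estimates directly covers that.
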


 For simplicity,  we will only consider the integer case that $s=[s]\geq2$ in the following part of our paper.
\section{Local Gevrey regularity and analyticity}
Now we can present a main theorem of our paper.
\begin{theo}\label{local-one-component}
 Let $\sigma\geq1$ and $q>s+\frac12.$ Assume that $u_0\in G^1_{\sigma,q}(\mathbb R).$ Then for every $0<\delta<1,$
  there exists  a $T_0$ such that  (\ref{B}) has a unique solution $u$ which is holomorphic  in
  $|t|<\frac{T_0(1-\delta)^\sigma}{2^\sigma-1}$ with values in $G^\delta_{\sigma, q}(\mathbb R).$
  Moreover, $T_0 \approx \frac1{\|u_0\|_{G^1_{\sigma,q}(\mathbb R)}}.$
\end{theo}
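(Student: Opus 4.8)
The plan is to verify the three hypotheses of the generalized Ovsyannikov theorem (Lemma \ref{ACK-Luo}) for the equation (\ref{B}), applied in the scale of Sobolev--Gevrey spaces $X_\delta = G^\delta_{\sigma,q}(\mathbb R)$ with $q>s+\frac12$. The first task is to rewrite (\ref{B}) in the abstract form $\frac{\mathrm d u}{\mathrm d t}=F(t,u)$. Since $m=(1-\partial_x^2)^s u$ and the equation reads $m_t+um_x+au_x m=0$, I would invert the inertia operator and write
\begin{align*}
F(u)=-(1-\partial_x^2)^{-s}\big[u\,\partial_x(1-\partial_x^2)^s u + a\,(\partial_x u)(1-\partial_x^2)^s u\big].
\end{align*}
Here $F$ is autonomous, so the $t$-dependence in the hypotheses is vacuous and holomorphy in $t$ reduces to smoothness. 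The key structural observation is that $(1-\partial_x^2)^{-s}$ is a Fourier multiplier of order $-2s$ which gains regularity, exactly compensating for the $2s$ derivatives in $m$; thus $F$ maps $G^\delta_{\sigma,q}$ into $G^\delta_{\sigma,q}$ up to the single lost derivative produced by the $\partial_x$ in each nonlinear term.

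For condition (1), I would note that $F$ is a composition of the continuous bilinear product map (Proposition \ref{productlaws}, valid since $q>\frac12$), the derivative map $\partial_x$, and the smoothing multiplier $(1-\partial_x^2)^{-s}$; each is a holomorphic (indeed polynomial) map between the relevant Gevrey spaces, so $t\mapsto F(u(t))$ is holomorphic into $X_{\delta'}$ whenever $t\mapsto u(t)$ is holomorphic into $X_\delta$. For conditions (2) and (3), the central estimates combine Proposition \ref{gradient estimate} (which supplies the crucial factor $(\delta-\delta')^{-\sigma}$ when passing $\partial_x$ from the $\delta$-norm to the $\delta'$-norm) with Proposition \ref{productlaws} (the algebra property) and the boundedness of $(1-\partial_x^2)^{-s}$ on these spaces. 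Concretely, writing $F(u)-F(v)$ and exploiting bilinearity, I would split each difference into terms of the form $(\text{factor})\cdot\partial_x(u-v)$ and $\partial_x(\text{factor})\cdot(u-v)$, bound the product in $G^{\delta'}_{\sigma,q}$ by $C_q$ times the product of norms, and route the single $\partial_x$ through Proposition \ref{gradient estimate}. Since $u,v$ range over $\overline{B(u_0,R)}$, their $\delta$-norms are controlled by $\|u_0\|_{G^1_{\sigma,q}}+R$, and $(1-\partial_x^2)^{-s}$ only decreases the norm; this yields the Lipschitz bound of condition (2) with Lipschitz constant $L$ proportional to $R+\|u_0\|_{G^1_{\sigma,q}}$. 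Condition (3) follows identically by evaluating at the constant element $u_0$, giving $M$ proportional to $\|u_0\|_{G^1_{\sigma,q}}^2$ with the factor $(1-\delta)^{-\sigma}$ supplied again by Proposition \ref{gradient estimate}.

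With the three conditions verified, Lemma \ref{ACK-Luo} delivers a $T_0\in(0,T)$ and a unique solution holomorphic in $|t|<\frac{T_0(1-\delta)^\sigma}{2^\sigma-1}$ with values in $G^\delta_{\sigma,q}$, which is exactly the assertion. For the final claim $T_0\approx\|u_0\|_{G^1_{\sigma,q}}^{-1}$, I would invoke the explicit formula for $T_0$ recorded in Remark \ref{T-estimate}; tracking the dependencies, $L$ scales linearly in $\|u_0\|_{G^1_{\sigma,q}}$ (for fixed $R$, or choosing $R$ comparable to $\|u_0\|_{G^1_{\sigma,q}}$) and $M$ scales quadratically, so both terms in the minimum defining $T_0$ scale like $\|u_0\|_{G^1_{\sigma,q}}^{-1}$.

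The main obstacle I anticipate is the bookkeeping of the Gevrey-norm constants: the factor $(\delta-\delta')^{-\sigma}$ must appear exactly once per nonlinear term so that the Lipschitz estimate matches the weakened condition (2) with the correct exponent $\sigma$ (and not $2\sigma$). This forces a careful organization in which the algebra estimate (Proposition \ref{productlaws}) is applied at the \emph{same} regularity level $\delta'$ for both factors while only the differentiated factor absorbs the derivative loss via Proposition \ref{gradient estimate}; allotting the intermediate radius (say $\frac{\delta+\delta'}{2}$) and combining $(\delta-\delta')^{-\sigma}$ with a bounded factor is the delicate point that makes the nonlinear-decay version of the Cauchy--Kovalevsky machinery apply.
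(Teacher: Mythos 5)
Your overall strategy --- verifying the three hypotheses of Lemma \ref{ACK-Luo} in the scale $X_\delta=G^\delta_{\sigma,q}$, choosing $R\approx\|u_0\|_{G^1_{\sigma,q}}$, and reading off $T_0$ from Remark \ref{T-estimate} --- is exactly the paper's, and the endgame is fine. The gap is in the treatment of the nonlinearity. You keep $F$ in the naive form $F(u)=-(1-\partial_x^2)^{-s}\big[u\,m_x+a\,u_x m\big]$ with $m=(1-\partial_x^2)^su$, and propose to estimate the products with Proposition \ref{productlaws} plus one application of Proposition \ref{gradient estimate}. But Proposition \ref{productlaws} is a symmetric algebra property at a single index $>\frac12$, whereas the products $u\,m_x$ and $u_x m$ are badly asymmetric: $m_x$ lives only in $G^\delta_{\sigma,q-2s-1}$ and $m$ in $G^\delta_{\sigma,q-2s}$, and for $q$ in the range $s+\frac12<q\le 2s+\frac12$ these indices are negative (e.g.\ $q-2s-1$ is near $-s-\frac12$ when $q$ is near $s+\frac12$). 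The step ``bound the product in $G^{\delta'}_{\sigma,q}$ by $C_q$ times the product of norms'' therefore has no justification from the stated tools; you would need an asymmetric Moser/paraproduct-type estimate $\|fg\|_{G^\delta_{\sigma,r}}\lesssim\|f\|_{G^\delta_{\sigma,q}}\|g\|_{G^\delta_{\sigma,r}}$ valid for negative $r$ with $r\ge -q$, which neither the paper provides nor you prove. As written, your plan closes only for $q>2s+\frac12$, not for the full stated range $q>s+\frac12$.

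The paper sidesteps this entirely with the algebraic reformulation (\ref{equ.u}) imported from \cite{He-Yin-new}: the nonlinearity is rewritten as the transport term $uu_x$ plus $(1-\partial_x^2)^{-s}$ applied to a sum of perfect derivatives of squares $\partial_x^{2i-1}\big[(\partial_x^ju)^2\big]$ with $(2i-1)+2j\le 2s+1$. Each such nonlocal term is then estimated by $\|(\partial_x^ju)^2\|_{G^{\delta'}_{\sigma,q-j}}$ with $q-j\ge q-s>\frac12$, so the symmetric algebra property suffices and, notably, no factor $(\delta-\delta')^{-\sigma}$ is needed for $K(u,u)$ at all --- only the extracted term $\frac12\partial_x(u^2)$ consumes Proposition \ref{gradient estimate}. (This also makes your worry about the factor appearing ``exactly once per nonlinear term'' and the intermediate radius $\frac{\delta+\delta'}{2}$ moot.) To repair your argument you must either import this reformulation or establish the asymmetric Gevrey product estimate; without one of these, the key estimate does not go through.
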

The one-component equation (\ref{B}) can be rewritten as (see \cite{He-Yin-new})
\begin{align}\label{equ.u}\begin{array}{lll}
u_t+uu_x=K(u,u)\\
\ \ \ \ \sim(1-\partial_x^2)^{-s}\Big(\partial_x^{2s-1}(u_x^2)+\partial_x^{2s-3}(u_{xx}^2)+...\\
~~~~~~~~~~~~~~~~~~~~~~~~~~+\partial_x[(\partial_x^su)^2]+\partial_x^{2s-3}(u_x^2)+...
+\partial_x[(\partial_x^{s-1}u)^2]
+...+\partial_x(u^2)\Big).
\end{array}
\end{align}
 In order to use  Lemma \ref{ACK-Luo}, we rewrite it as
\begin{align*}
   \left\{
   \begin{array}{lll}
      u_t=F(u):=-u\partial_xu+K(u,u),\\
      u|_{t=0}=u_0.
      \end{array}
   \right.
\end{align*}
For  a fixed $\sigma\geq1$ and $q>s+\frac12,$ Proposition \ref{embedding sequence}  ensures that
 $\{G^\delta_{\sigma,q}\}_{0<\delta<1}$ is a scale of decreasing Banach spaces.
 For any $0<\delta'<\delta,$ we need to estimate
 \begin{align*}
  \|F(u)\|_{G^{\delta'}_{\sigma,q}}\leq \frac12\|\partial_x(u^2)\|_{G^{\delta'}_{\sigma,q}}+\|K(u,u)\|_{G^{\delta'}_{\sigma,q}}.
    \end{align*}
    Note that  $\frac12\|\partial_x(u^2)\|_{G^{\delta'}_{\sigma,q}}\leq \frac{e^{-\sigma}\sigma^\sigma}{2(\delta-\delta')^\sigma}
   \|u^2\|_{G^{\delta}_{\sigma,q}} \leq C_q \frac{e^{-\sigma}\sigma^\sigma}{2(\delta-\delta')^\sigma}
   \|u\|^2_{G^{\delta}_{\sigma,q}}.$
   For any integers $1\leq i\leq s,0\leq j\leq s $ such that $(2i-1)+2j\leq2s+1,$ we have
   \begin{align*}
   \|(1-\partial_x^2)^{-s}\partial_x^{2i-1}[(\partial_x^ju)^2]\|_{G^{\delta'}_{\sigma,q}}
   \leq C_q\|(\partial_x^ju)^2\|_{G^{\delta'}_{\sigma,q-2s+2i-1}}\leq
   C_q\|(\partial_x^ju)^2\|_{G^{\delta'}_{\sigma,q-j}}\leq C_q\|u\|^2_{G^{\delta}_{\sigma,q}}.
   \end{align*}
   Hence
   \begin{align*}
   \|K(u,u)\|_{G^{\delta'}_{\sigma,q}}\leq C_{q,s}\|u\|^2_{G^{\delta}_{\sigma,q}}.
   \end{align*}
   Thus,  in view of $(\delta-\delta')^\sigma <1,$ we have
  $$ \|F(u)\|_{G^{\delta'}_{\sigma,q}}\leq  \frac{C_{q,s} (e^{-\sigma}\sigma^\sigma+2)}{2(\delta-\delta')^\sigma}
   \|u\|^2_{G^{\delta}_{\sigma,q}},$$
   which implies that $F$ satisfies  the condition (1) of Lemma \ref{ACK-Luo}. The similar computations yields
$$ \|F(u_0)\|_{G^{\delta}_{\sigma,q}}\leq  \frac{C_{q,s}(e^{-\sigma}\sigma^\sigma+2)}{2(1-\delta)^\sigma}
   \|u_0\|^2_{G^{1}_{\sigma,q}},$$
 which implies  that $F$ satisfies the condition (3) of Lemma \ref{ACK-Luo}  with
    $M=C_{q,s}(\frac{e^{-\sigma}\sigma^\sigma}{2}+1)\|u_0\|^2_{G^{1}_{\sigma,q}}.$
    It remains to verify that $F$ satisfies the condition (2) of Theorem \ref {ACK-Luo}. Assume that
    $\|u-u_0\|_{G^{\delta}_{\sigma,q}}\leq R$ and $\|v-v_0\|_{G^{\delta}_{\sigma,q}}\leq R.$  Applying  Propositions \ref{gradient estimate},
    we get
    \begin{align*}
     \|F(u)-F(v)\|_{G^{\delta'}_{\sigma,q}}&=\Big\|-\frac12\partial_x(u^2-v^2)+(K(u,u)-K(v,v))\Big\|_{G^{\delta'}_{\sigma,q}}\\
     &\leq \frac{e^{-\sigma}\sigma^\sigma}{2(\delta-\delta')^\sigma}
   \|(u+v)(u-v)\|_{G^{\delta}_{\sigma,q}}+\|K(u,u)-K(v,v)\|_{G^{\delta'}_{\sigma,q}},
       \end{align*}
       where
       \begin{align*}
       K(u,u)-K(v,v)\sim(1-\partial_x^2)^{-s}\Big(\partial_x^{2s-1}(u_x^2-v_x^2)
       +\partial_x^{2s-3}(u_{xx}^2-v_{xx}^2)+...+\partial_x[(\partial_x^su)^2-(\partial_x^sv)^2]+\\
+\partial_x^{2s-3}(u_x^2-v_x^2)+...+\partial_x[(\partial_x^{s-1}u-(\partial_x^{s-1}v)^2]
+...+\partial_x(u^2-v^2)\Big).
\end{align*}
Note that
 $$\|(u+v)(u-v)\|_{G^\delta_{\sigma,q}}\leq C_q\|u+v\|_{G^\delta_{\sigma,q}}\|u-v\|_{G^\delta_{\sigma,q}}
\leq C_q(2\|u_0\|_{G^\delta_{\sigma,q}}+2R)\|u-v\|_{G^\delta_{\sigma,q}}.$$
 On the other hand,  for any integers $1\leq i\leq s,0\leq j\leq s $ such that $(2i-1)+2j\leq2s+1,$ we obtain
   \begin{align*}
   \big\|(1-\partial_x^2)^{-s}\partial_x^{2i-1}[(\partial_x^ju)^2-(\partial_x^jv)^2]\big\|_{G^{\delta'}_{\sigma,q}}
   &\leq C_q\|(\partial_x^ju)^2-(\partial_x^jv)^2\|_{G^{\delta}_{\sigma,q-2s+2i-1}}\\
   &\leq C_q\|\partial_x^j(u+v)\partial_x^j(u-v)\|_{G^{\delta}_{\sigma,q-j}}\\
  & \leq C_q\|u+v\|_{G^{\delta}_{\sigma,q}}\|u-v\|_{G^{\delta}_{\sigma,q}}\\
   &\leq C_q(2\|u_0\|_{G^\delta_{\sigma,q}}+2R)\|u-v\|_{G^\delta_{\sigma,q}}.
   \end{align*}
   Therefore, we have  $\|K(u,u)-K(v,v)\|_{G^{\delta'}_{\sigma,q}}\leq C_{q,s}(2\|u_0\|_{G^\delta_{\sigma,q}}+2R)\|u-v\|_{G^\delta_{\sigma,q}},$ and
   \begin{align*}
    \|F(u)-F(v)\|_{G^{\delta'}_{\sigma,q}}\leq  \frac{C_{q,s}(e^{-\sigma}\sigma^\sigma+2)}{(\delta-\delta')^\sigma}
    (\|u_0\|_{G^1_{\sigma,q}}+R)\|u-v\|_{G^\delta_{\sigma,q}}.
   \end{align*}
Thus   $F$ satisfies the condition (2) of Lemma \ref{ACK-Luo}  with
$L=C_{q,s}(e^{-\sigma}\sigma^\sigma+2)(\|u_0\|_{G^1_{\sigma,q}}+R).$ Hence we obtain the local existence
result of (\ref{B}) with the Gevrey regularity or analyticity, and $T_0=\min \{\frac1{2^{2\sigma+4}L},\frac{(2^\sigma-1)R}{(2^\sigma-1)2^{2\sigma+3}LR+M}\}.$
By setting $R=\|u_0\|_{G^1_{\sigma,q}},$ we see that
 $L=2C_{q,s}(e^{-\sigma}\sigma^\sigma+2)\|u_0\|_{G^1_{\sigma,q}}$
 and $M\leq 2^{2\sigma+3}LR,$ and hence
 $$T_0=\min \{\frac1{2^{2\sigma+4}L},\frac{(2^\sigma-1)R}{(2^\sigma-1)2^{2\sigma+3}LR+M}\}=
 \frac1{2^{2\sigma+5}C_{q,s}(e^{-\sigma}\sigma^\sigma+2)\|u_0\|_{G^1_{\sigma,q}}}.\qed$$

We state another theorem to present the local Gevrey regularity and analyticity for the two-component system (\ref{1}).
 \begin{theo}\label{local-two-component}
 Let $\sigma \geq1$ and $q>s+\frac12.$ Assume that $u_0\in G^1_{\sigma,q}(\mathbb R)$
 and $\rho_0\in G^1_{\sigma,q_1}(\mathbb R).$ Then for  every $0<\delta<1,$
 there exists  a $T_0>0$ such that the two-component  system (\ref{1}) has a unique solution $(u,\rho)$ which is
 holomorphic in $|t|<\frac{T_0(1-\delta)^\sigma}{2^\sigma-1}$ with  values  in
 $G^\delta_{\sigma,q}(\mathbb R)\times G^\delta_{\sigma,q_1}(\mathbb R).$ Moreover, we can have
 $$ T_0\approx \frac 1{\|u_0\|_{G^1_{\sigma,q}}+\|\rho_0\|_{G^1_{\sigma,q_1}}+1}.$$
    \end{theo}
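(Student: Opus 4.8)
The plan is to mimic the structure of the proof of Theorem \ref{local-one-component}, reducing the two-component system (\ref{1}) to the abstract form (\ref{ACP1}) and verifying the three hypotheses of Lemma \ref{ACK-Luo} on the product scale $\{G^\delta_{\sigma,q}(\mathbb R)\times G^\delta_{\sigma,q_1}(\mathbb R)\}_{0<\delta<1}$, equipped with the sum norm. First I would rewrite the system by inverting the inertia operator $(1-\partial_x^2)^s$ in the first equation. Writing $U=(u,\rho)$, the system takes the form
\begin{align*}
   \left\{
   \begin{array}{lll}
      u_t=F_1(U):=-u\partial_x u+K(u,u)-\kappa(1-\partial_x^2)^{-s}(\rho\rho_x)+\alpha(1-\partial_x^2)^{-s}u_x,\\
      \rho_t=F_2(U):=-u\partial_x\rho-(a-1)\rho\,\partial_x u,\\
      U|_{t=0}=(u_0,\rho_0),
      \end{array}
   \right.
\end{align*}
where $K(u,u)$ is exactly the nonlocal term already estimated in (\ref{equ.u}). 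Proposition \ref{embedding sequence} guarantees that the product spaces form a decreasing Banach scale, so the abstract setup applies verbatim.

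The core of the argument is the estimation of the new terms, all carried out with Proposition \ref{gradient estimate} (to convert each $\partial_x$ into a factor $(\delta-\delta')^{-\sigma}$) and Proposition \ref{productlaws} (the algebra property, valid since $q>s+\tfrac12>\tfrac12$ and $q_1>\tfrac12$ by (\ref{condition3})). The terms in $F_1$ coming from $u$ alone are handled exactly as in Theorem \ref{local-one-component}. For the coupling term $\kappa(1-\partial_x^2)^{-s}(\rho\rho_x)$, I would use that $(1-\partial_x^2)^{-s}$ gains $2s$ derivatives together with Proposition \ref{gradient estimate} to bound it by $C_{q,s}(\delta-\delta')^{-\sigma}\|\rho\|_{G^\delta_{\sigma,q_1}}^2$, where the condition $q_1\le q-1\le q_1+2s-2$ from (\ref{condition3}) is what lets the output land in $G^{\delta'}_{\sigma,q}$; the lower-order term $\alpha(1-\partial_x^2)^{-s}u_x$ is controlled similarly and is in fact the smoothest. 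For $F_2$, the transport term $u\partial_x\rho$ and the term $(a-1)\rho\,\partial_x u$ are each products of one undifferentiated and one $x$-differentiated factor, so Propositions \ref{gradient estimate} and \ref{productlaws} give bounds of the form $C(\delta-\delta')^{-\sigma}\|u\|_{G^\delta_{\sigma,q}}\|\rho\|_{G^\delta_{\sigma,q_1}}$ in the $G^{\delta'}_{\sigma,q_1}$ norm. Summing the $u$- and $\rho$-components then yields condition (2), the Lipschitz-type estimate, with a constant $L$ depending on $u_0,\rho_0$ and $R$; conditions (1) and (3) follow by the same computations applied to $U_0=(u_0,\rho_0)$.

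The main obstacle I anticipate is the bookkeeping of the mixed regularity indices $q$ and $q_1$: one must check that the regularity gained from $(1-\partial_x^2)^{-s}$ exactly compensates both the derivatives appearing in $\rho\rho_x$ and the mismatch between the $q_1$-scale carrying $\rho$ and the $q$-scale receiving the output of $F_1$. This is precisely where hypothesis (\ref{condition3}) is used, and it is the one genuinely new ingredient relative to the one-component proof; everything else is a routine adaptation. Once the three conditions are verified with $\sigma\ge1$, Lemma \ref{ACK-Luo} delivers a unique solution holomorphic in $|t|<\frac{T_0(1-\delta)^\sigma}{2^\sigma-1}$ with values in the product space. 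Finally, setting $R=\|u_0\|_{G^1_{\sigma,q}}+\|\rho_0\|_{G^1_{\sigma,q_1}}$ and tracking that $L$ is affine in this quantity while $M$ is quadratic, the formula for $T_0$ in Remark \ref{T-estimate} gives the stated lifespan $T_0\approx\big(\|u_0\|_{G^1_{\sigma,q}}+\|\rho_0\|_{G^1_{\sigma,q_1}}+1\big)^{-1}$, where the additive $1$ absorbs the linear term $\alpha(1-\partial_x^2)^{-s}u_x$.
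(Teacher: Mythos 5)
Your proposal is correct and follows essentially the same route as the paper: the same reduction to the abstract Cauchy problem on the product scale $G^\delta_{\sigma,q}\times G^{\delta}_{\sigma,q_1}$ with the sum norm, the same use of Propositions \ref{gradient estimate} and \ref{productlaws} together with the smoothing of $(1-\partial_x^2)^{-s}$ and condition (\ref{condition3}) to verify the three hypotheses of Lemma \ref{ACK-Luo}, and the same choice $R=\|z_0\|_1+1$ (up to the harmless additive constant) to extract the lifespan from Remark \ref{T-estimate}. The only differences are cosmetic rewritings of the nonlinearities ($\rho\rho_x$ versus $\tfrac12\partial_x(\rho^2)$, and $-u\rho_x-(a-1)u_x\rho$ versus $-\partial_x(u\rho)+(2-a)u_x\rho$).
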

 \begin{proof}
  We  rewrite the two-component system (1.1) into the following form
  \begin{align*}
     \left\{\begin{array}{llll}
     z_t=F(z),\\
     z|_{t=0}=z_0,
       \end{array} \right.
     \end{align*}
     with $z=(u,\rho)^{\mathrm T}, z_0=(u_0,\rho_0)^{\mathrm T}$ and
     \begin{align}\label{Def-F}
          F(z)=\left(\begin{array}{lll} F_1(z)\\ F_2(z)\end{array}\right) :=\left(\begin{array}{lll}
        -\frac12\partial_x(u^2)+K(u,u)+(1-\partial_x^2)^{-s}(\alpha u_x-\frac\kappa2 \partial_x(\rho^2))\\
        -\partial_x(u\rho)+(2-a)u_x\rho\end{array}\right).
     \end{align}
     For fixed $\sigma\geq1$ and $s>\frac32,$ we set
      $X_\delta=G^\delta_{\sigma,q}(\mathbb R)\times G^\delta_{\sigma,q_1}(\mathbb R)$ and
      $$\|z\|_{\delta}=\|u\|_{G^\delta_{\sigma,q}}+\|\rho\|_{G^\delta_{\sigma,q_1}}.$$
      Proposition \ref{embedding sequence}  then ensures that $\{X_\delta\}_{0<\delta<1}$ is a scale of  decreasing Banach spaces.
      From the proof of Theorem \ref{local-one-component}, we have shown that for any $0<\delta'<\delta,$
      $$\| -\frac12\partial_x(u^2)+K(u,u)\|_{G^{\delta'}_{\sigma,q}}
      \leq  \frac{C_{q,s}(e^{-\sigma}\sigma^\sigma+2)}{2(\delta-\delta')^\sigma}
   \|u\|^2_{G^{\delta}_{\sigma,q}}.$$
   Note that $\|(1-\partial_x^2)^{-s}u_x\|_{G^{\delta'}_{\sigma,q}}\leq \|u\|_{G^{\delta}_{\sigma,q-2s+1}}\leq
   \|u\|_{G^{\delta}_{\sigma,q}}$ and
   \begin{align*}
    \|(1-\partial_x^2)^{-s}\partial_x(\rho^2)\|_{G^{\delta'}_{\sigma,q}}
    \leq \|\rho^2\|_{G^{\delta}_{\sigma,q-2s+1}}\leq \|\rho^2\|_{G^{\delta}_{\sigma,q_1}}\leq
    C_{q_1}\|\rho\|^2_{G^{\delta}_{\sigma,q_1}}.
   \end{align*}
   Hence, we  obtain
   $$\|F_1(z)\|_{G^{\delta'}_{\sigma,q}}\leq\frac{C_{q,s}(e^{-\sigma}\sigma^\sigma+2)}{2(\delta-\delta')^\sigma}
   \|u\|^2_{G^{\delta}_{\sigma,q}}+|\alpha|\cdot \|u\|_{G^{\delta}_{\sigma,q}}+\frac{|\kappa|}{2}C_{q_1}\|\rho\|^2_{G^{\delta}_{\sigma,q_1}}.$$
   On the other hand, considering the second component, we see
   \begin{align*}\begin{array}{lll}
      \|F_2(z)\|_{G^{\delta'}_{\sigma,q_1}}&=\|-\partial_x(u\rho)+(2-a)u_x\rho\|_{G^{\delta'}_{\sigma,q_1}}\\
     &\leq \frac{e^{-\sigma}\sigma^\sigma}{(\delta-\delta')^\sigma}
   \|u\rho\|_{G^{\delta}_{\sigma,q_1}}+|2-a|\cdot\|u_x\rho\|_{G^{\delta'}_{\sigma,q_1}}\\
   &\leq C_{q_1}\frac{e^{-\sigma}\sigma^\sigma}{(\delta-\delta')^\sigma}
   \|u\|_{G^{\delta}_{\sigma,q}}\|\rho\|_{G^{\delta}_{\sigma,q_1}}+|2-a|\cdot C_{q_1}\cdot
   \|u_x\|_{G^{\delta'}_{\sigma,q}}
   \|\rho\|_{G^{\delta'}_{\sigma,q_1}}\\
  & \leq  \frac{(|2-a|+1)C_{q_1}e^{-\sigma}\sigma^\sigma}{(\delta-\delta')^\sigma}
   \|u\|_{G^{\delta}_{\sigma,q}}\|\rho\|_{G^{\delta}_{\sigma,q_1}}.
  \end{array} \end{align*}
  Then  we can obtain
  $$\|F(z)\|_{\delta'}=\|F_1(z)\|_{G^{\delta'}_{\sigma,q}}
  +\|F_2(z)\|_{G^{\delta'}_{\sigma,q_1}} \leq
  C_{q,q_1,a,s,\alpha, \kappa}\frac{e^{-\sigma}\sigma^\sigma+2}{(\delta-\delta')^\sigma}
  (\|u\|_{G^{\delta}_{\sigma,q}}+\|\rho\|_{G^{\delta}_{\sigma,q_1}}+1)^2,$$
and $F$  satisfies the condition (1) of Lemma \ref{ACK-Luo}. By the similar computations, we obtain that
$$\|F(z_0)\|_{\delta} \leq
 C_{q,q_1,a,s,\alpha, \kappa}\,(\mathrm e^{-\sigma}\sigma^\sigma+2)
  (\|u_0\|_{G^{1}_{\sigma,q}}+\|\rho_0\|_{G^{1}_{\sigma,q_1}}+1)^2\,
  \frac1{{(1-\delta)^\sigma}},$$
which means that  $F$ satisfies the condition (3) of the Lemma \ref{ACK-Luo} with
$$
M=C_{q,q_1,a,s,\alpha, \kappa}\,(\mathrm e^{-\sigma}\sigma^\sigma+2)
  (\|z_0\|_1+1)^2.$$
  It remains to show that $F$ satisfies the condition (2) of Lemma \ref{ACK-Luo}.
   Assume that
   $$\|z_1-z_0\|_{\delta}\leq R, ~~ \|z_2-z_0\|_{\delta}\leq R,$$
   one can obtain
   \begin{align*}
   \begin{array}{lll}
   \|F_1(z_1)-F_1(z_2)\|_{G^{\delta'}_{\sigma,q}}\\
  \  \leq \|-\frac12\partial_x(u^2_1-u^2_2)+(K(u_1,u_1)-K(u_2,u_2))\|_{G^{\delta'}_{\sigma,q}}
   +\|(1-\partial_x^2)^{-s}\big(\alpha(u_1-u_2)_x-\frac\kappa2(\rho_1^2-\rho_2^2)_x\big)\|_{G^{\delta'}_{\sigma,q}}\\
  \  \leq  \frac{C_{q,s}(e^{-\sigma}\sigma^\sigma+2)}{(\delta-\delta')^\sigma}
    (\|u_0\|_{G^1_{\sigma,q}}+R)\|u_1-u_2\|_{G^\delta_{\sigma,q}}\\
   ~~~~~~~~~~~~~~~~~~
   +|\alpha| \|u_1-u_2\|_{G^\delta_{\sigma,q-2s+1}}+
    \frac{|\kappa|}{2}\|(\rho_1+\rho_2)(\rho_1-\rho_2)\|_{G^\delta_{\sigma,q-2s+1}},
    \end{array}
   \end{align*}
   with
   \begin{align*}
    \|(\rho_1+\rho_2)(\rho_1-\rho_2)\|_{G^\delta_{\sigma,q-2s+1}}&\leq
   \|(\rho_1+\rho_2)(\rho_1-\rho_2)\|_{G^\delta_{\sigma,q_1}} \\
   &\leq C_{q_1}(\|\rho_1\|_{G^\delta_{\sigma,q_1}}\|\rho_2\|_{G^\delta_{\sigma,q_1}})\|\rho_1-\rho_2\|_{G^\delta_{\sigma,q_1}}\\
   &\leq C_{q_1}(2\|\rho_0\|_{G^\delta_{\sigma,q_1}}+2R)\|\rho_1-\rho_2\|_{G^\delta_{\sigma,q_1}}.
\end{align*}
It then follows that
\begin{align*}
\|F_1(z_1)-F_1(z_2)\|_{G^{\delta'}_{\sigma,q}}\leq \frac {C_{q,q_1,a,s,\alpha,\kappa}\,(\mathrm e^{-\sigma}\sigma^\sigma+2)
  (\|z_0\|_1+1+R)}{(\delta-\delta')^\sigma}\|z_1-z_2\|_\delta.
\end{align*}
On the other hand, we see
\begin{align*}
   &\|F_2(z_1)-F_2(z_1)\|_{G^{\delta'}_{\sigma,q_1}}\\
   &\ \ \ =\|-\partial_x(u_1\rho_1- u_2\rho_2)
   +(2-a)(u_{1x}\rho_1-u_{2x}\rho_2)\|{G^{\delta'}_{\sigma,q_1}}\\
   &\ \ \ \leq\frac{e^{-\sigma}\sigma^\sigma}{(\delta-\delta')^\sigma}\|u_1(\rho_1-\rho_2)
   +(u_1-u_2)\rho_2\|_{G^\delta_{\sigma,q_1}}
   +|2-a|\cdot \|(u_1-u_2)_x\rho_1+u_{2x}(\rho_1-\rho_2)\|_{G^{\delta'}_{\sigma,q_1}}.
\end{align*}
Since $q,q_1$ satisfies the condition (\ref{condition3}), according to  Lemma \ref{productlaws}, we have
\begin{align*}
   \|u_1(\rho_1-\rho_2)
   +(u_1-u_2)\rho_2\|_{G^\delta_{\sigma,q_1}}
   &\leq C_{q_1}\|u_1\|_{G^\delta_{\sigma,q}}\|\rho_1-\rho_2\|_{G^\delta_{\sigma,q_1}}
    +C_{q_1}\|u_1-u_2\|_{G^\delta_{\sigma,q}}\|\rho_2\|_{G^\delta_{\sigma,q_1}}\\
    &\leq  C_{q_1}(\|z_0\|_\delta+R)\|z_1-z_2\|_{\delta},
    \end{align*}
    and
 \begin{align*}
 \begin{array}{lll}
 \|(u_1-u_2)_x\rho_1+u_{2x}(\rho_1-\rho_2)\|_{G^{\delta'}_{\sigma,q_1}}\\
 \ \ \ \ \ \ \leq C_{q_1}\big(\|(u_1-u_2)_x\|_{G^{\delta'}_{\sigma,q_1}}\|\rho_1\|_{G^{\delta'}_{\sigma,q_1}}+
\|u_{2x}\|_{G^{\delta'}_{\sigma,q_1}}\|\rho_1-\rho_2\|_{G^{\delta'}_{\sigma,q_1}}\big)\\
\ \ \ \ \ \ \leq C_{q_1}\frac{e^{-\sigma}\sigma^\sigma}{(\delta-\delta')^\sigma}
\Big\{\|u_1-u_2\|_{G^{\delta}_{\sigma,q}}(\|\rho_0\|_{G^{\delta}_{\sigma,q_1}}+R)+
(\|u_{0}\|_{G^{\delta}_{\sigma,q}}+R)\|\rho_1-\rho_2\|_{G^{\delta}_{\sigma,q_1}}\Big\}\\
\ \ \ \ \ \ \leq C_{q_1}\frac{e^{-\sigma}\sigma^\sigma}{(\delta-\delta')^\sigma}
(\|z_0\|_\delta+R)\|z_1-z_2\|_{\delta}.
\end{array}
    \end{align*}
    and hence
    \begin{align*}
   \|F_2(z_1)-F_2(z_1)\|_{G^{\delta'}_{\sigma,q_1}}
   \leq C_{q_1}(1+|a-2|)\frac{e^{-\sigma}\sigma^\sigma}{(\delta-\delta')^\sigma}
(\|z_0\|_1+R)\|z_1-z_2\|_{\delta}.
   \end{align*}

    Therefore, we deduce  that
    \begin{align}\label{F-condition2}
          \|F(z_1)-F(z_2)\|_{\delta'}&\leq  \|F_1(z_1)-F_1(z_2)\|_{G^{\delta'}_{\sigma,q}}
          +\|F_2(z_1)-F_2(z_2)\|_{G^{\delta'}_{\sigma,q_1}}\nonumber\\
          &\leq \frac {C_{q,q_1,a,s,\alpha, \kappa}\,(\mathrm e^{-\sigma}\sigma^\sigma+2)
  (\|z_0\|_1+1+R)}{(\delta-\delta')^\sigma} \|z_1-z_2\|_\delta,
    \end{align}
     and  $F$ satisfies  the condition (2)  of Lemma \ref{ACK-Luo}  with $L=C_{q,q_1,a,s,\alpha,\kappa}\,(\mathrm e^{-\sigma}\sigma^\sigma+2)
  (\|z_0\|_1+1+R).$ Hence we obtain the local existence
result of (\ref{B}) with the Gevrey regularity or analyticity, and $$T_0=\min \{\frac1{2^{2\sigma+4}L},\frac{(2^\sigma-1)R}{(2^\sigma-1)2^{2\sigma+3}LR+M}\}.$$
Moreover, by setting  $R=\|z_0\|_1+1,$
     we can see $L=2C_{q,q_1,a,s,\alpha,\kappa}\,(\mathrm e^{-\sigma}\sigma^\sigma+2)(\|z_0\|_1+1)$ and  $M\leq 2^{2\sigma+3}LR.$ It then follows that
     \begin{align*}
       T_0
       =\frac1{2^{2\sigma+5}C_{q,q_1,a,s,\alpha, \kappa}(e^{-\sigma}\sigma^\sigma+2)(\|z_0\|_1+1)}.
     \end{align*}
     This  completes the proof of  Theorem \ref{local-two-component}.
   \end{proof}
    \section{Continuity of the data-to-solution map}
    In this section, we study the continuity of the data-to-solution map for initial data and solutions in Theorems \ref{local-one-component}--\ref{local-two-component}.
    We only prove this  for the two-component system (1.1).

     At first we introduce  a definition to explain what means the data-to-solution map is continuous  from
     $G^1_{\sigma,q}(\mathbb R)\times G^1_{\sigma,q_1}(\mathbb R)$ into the solution space.
     \begin{defi}
      Let $\sigma\geq1,q>s+\frac12$ and let $q_1$ satisfy the condition (\ref{condition3}). We say that the data-to-solution map
      $(u_0,\rho_0)\mapsto (u,\rho)$ of the system (1.1) is continuous if for a given initial datum
      $(u_0^\infty,\rho_0^\infty)\in G^1_{\sigma,q}\times G^1_{\sigma,q_1},$ there exists a
      $T=T(\|u_0^\infty\|_{G^1_{\sigma,q}} ,\|\rho_0^\infty\|_{G^1_{\sigma,q_1}})>0,$  such that for any sequence
       $(u_0^n,\rho_0^n)\in G^1_{\sigma,q}\times G^1_{\sigma,q_1}$ and
       $\|u_0^n-u_0^\infty\|_{G^1_{\sigma,q}} +\|\rho_0^n-\rho_0^\infty\|_{G^1_{\sigma,q_1}}\
       \xrightarrow {n\rightarrow \infty} 0,$ the corresponding  solutions $(u^n,\rho^n)$ of system (1.1) satisfy
       $\|z^n-z^\infty\|_{E_T}:=\|u^n-u^\infty\|_{E_{q,T}} +\|\rho^n-\rho^\infty\|_{E_{q_1,T}}
       \xrightarrow {n\rightarrow \infty} 0,$ where
       \begin{align*}
       \|f\|_{E_{q,T}} := \sup\limits_{|t|<\frac{T(1-\delta)^\sigma}{2^\sigma-1}}
       \Big(\|f(t)\|_{G^\delta_{\sigma,q}}(1-\delta)^\sigma \sqrt{1-\frac{|t|}{T(1-\delta)^\sigma}}\ \Big).
       \end{align*}
     \end{defi}
     Also, we need to introduce the following lemma.
     \begin{lemm}\cite{Luo-Y}\label{Luo3.7}
       Let $\sigma\geq1.$ For every  $a>0,u\in E_a,0<\delta<1$ and $0\leq t <\frac{a(1-\delta)^\sigma}{2^\sigma-1}$
       we have$$  \int^t_0\frac{\|u(\tau)\|_{\delta(\tau)}}{(\delta(\tau)-\delta)^\sigma}\mathrm d \tau
      \leq\frac{a2^{2\sigma+3}\|u\|_{E_a}}{(1-\delta)^\sigma}\sqrt{\frac{a(1-\delta)^\sigma}{a(1-\delta)^\sigma-t}}\, ,
       $$
       where $\delta(t)=\frac12(1+\delta)+(\frac12)^{2+\frac1\delta}\Big\{[(1-\delta)^\sigma-\frac ta]^\frac1\sigma
       -[(1-\delta)^\sigma+(2^{\sigma+1}-1)\frac ta]^\frac1\sigma\Big\}\,\in (\delta,1).$
     \end{lemm}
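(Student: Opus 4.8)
The plan is to combine the defining weight of the space $E_a$, which turns $\|u(\tau)\|_{\delta(\tau)}$ into an explicit scalar factor, with a sharp analysis of the auxiliary curve $\delta(\tau)$, whose unusual shape is exactly what makes the resulting scalar integral converge with the advertised constant. First I would record the pointwise consequence of the $E_a$-norm: for every time $\tau$ that is admissible for the intermediate radius $\delta(\tau)$, one has
\[
\|u(\tau)\|_{\delta(\tau)} \leq \frac{\|u\|_{E_a}}{(1-\delta(\tau))^\sigma}\Big(1-\frac{\tau}{a(1-\delta(\tau))^\sigma}\Big)^{-\frac12}.
\]
For this to be legitimate the square-root weight must be real, i.e. $\tau<\frac{a(1-\delta(\tau))^\sigma}{2^\sigma-1}$; verifying this admissibility is the first thing the specific choice of $\delta(\tau)$ has to guarantee. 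Substituting the bound reduces the lemma to estimating the purely scalar quantity $\int_0^t (\delta(\tau)-\delta)^{-\sigma}(1-\delta(\tau))^{-\sigma}\big(1-\tfrac{\tau}{a(1-\delta(\tau))^\sigma}\big)^{-1/2}\,\mathrm d\tau$ and multiplying by $\|u\|_{E_a}$.

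Next I would establish the analytic properties of $\delta(\tau)$ on which everything rests. Writing $c=(1-\delta)^\sigma$ and $s=\tau/a$, one checks directly that $\delta(0)=\frac{1+\delta}{2}$ and that $\tau\mapsto\delta(\tau)$ is strictly decreasing, so that $\delta(\tau)<\frac{1+\delta}{2}$ and hence $1-\delta(\tau)\geq\frac{1-\delta}{2}$, which controls the factor $(1-\delta(\tau))^{-\sigma}$ by $2^\sigma(1-\delta)^{-\sigma}$ while keeping $(1-\delta(\tau))^\sigma\leq(1-\delta)^\sigma$. The delicate points are the strict lower bound $\delta(\tau)-\delta>0$, so that $\delta(\tau)\in(\delta,1)$, together with the quantitative estimates $(\delta(\tau)-\delta)^\sigma\gtrsim \frac{ac-\tau}{a}$ and $a(1-\delta(\tau))^\sigma-\tau\gtrsim ac-\tau$. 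All of these follow from the concavity (equivalently, the subadditivity) of $x\mapsto x^{1/\sigma}$ for $\sigma\geq1$: applying the mean value theorem to the difference $(c-s)^{1/\sigma}-(c+(2^{\sigma+1}-1)s)^{1/\sigma}$ and comparing the slopes of $x\mapsto(c+x)^{1/\sigma}$ over the nested intervals $[-s,0]$ and $[-s,(2^{\sigma+1}-1)s]$ yields the required inequalities. The coefficients $(\tfrac12)^{2+1/\delta}$ and $2^{\sigma+1}-1$ in the definition of $\delta(\tau)$ are tuned precisely so that these comparisons close and $\delta(\tau)$ stays trapped in $(\delta,1)$.

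With those bounds in hand the final step is elementary. The two lower estimates give $(\delta(\tau)-\delta)^{-\sigma}\lesssim \frac{a}{ac-\tau}$ and $\big(1-\tfrac{\tau}{a(1-\delta(\tau))^\sigma}\big)^{-1/2}=\sqrt{\tfrac{a(1-\delta(\tau))^\sigma}{a(1-\delta(\tau))^\sigma-\tau}}\lesssim \sqrt{\tfrac{ac}{ac-\tau}}$, so the integrand is dominated by a constant multiple of $a(ac-\tau)^{-3/2}$; since $\int_0^t a(ac-\tau)^{-3/2}\,\mathrm d\tau=2a\big[(ac-t)^{-1/2}-(ac)^{-1/2}\big]\leq 2a(ac-t)^{-1/2}$, and $(ac-t)^{-1/2}=(ac)^{-1/2}\sqrt{\tfrac{ac}{ac-t}}$, one recovers exactly the right-hand side $\frac{a\,2^{2\sigma+3}\|u\|_{E_a}}{(1-\delta)^\sigma}\sqrt{\tfrac{a(1-\delta)^\sigma}{a(1-\delta)^\sigma-t}}$ once the accumulated powers of $2$ from the comparisons are collected into the factor $2^{2\sigma+3}$. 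I expect the genuine obstacle to be the second paragraph: pinning down the two-sided control of $\delta(\tau)-\delta$ and the admissibility condition through the fractional-power difference estimates, since that is where the peculiar constants are forced and where any looseness would spoil the sharp exponent $2^{2\sigma+3}$.
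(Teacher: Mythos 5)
First, a point of comparison: this paper does not prove Lemma \ref{Luo3.7} at all --- it is imported verbatim from \cite{Luo-Y} --- so there is no in-paper proof to measure you against. Judged against the actual argument in \cite{Luo-Y} (modelled on \cite{BHP}), your outline follows the same route: unpack the $E_a$-weight to turn $\|u(\tau)\|_{\delta(\tau)}$ into a scalar factor, record $\delta(0)=\frac{1+\delta}{2}$, monotonicity and $1-\delta(\tau)\geq\frac{1-\delta}{2}$, prove the two quantitative lower bounds $(\delta(\tau)-\delta)^\sigma\gtrsim\frac{a(1-\delta)^\sigma-\tau}{a}$ and $a(1-\delta(\tau))^\sigma-\tau\gtrsim a(1-\delta)^\sigma-\tau$, and close with $\int_0^t a(ac-\tau)^{-3/2}\,\mathrm d\tau\leq 2a(ac-t)^{-1/2}$ in your notation $c=(1-\delta)^\sigma$. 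That is the correct skeleton.

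The gap is that the entire content of the lemma sits in the two lower bounds and the admissibility check, and you dispose of all three with one sentence about the mean value theorem and comparison of slopes. That is not a derivation, and here it matters because the claims are delicate and, as printed, partly false. (a) You assert the constants $(\tfrac12)^{2+1/\delta}$ and $2^{\sigma+1}-1$ are ``tuned precisely so that these comparisons close.'' They are not: take $\sigma=1$, so that $\delta(\tau)=\frac{1+\delta}{2}-2^{-1/\delta}\frac{\tau}{a}$ and $a(1-\delta(\tau))-\tau=\frac{a(1-\delta)}{2}-(1-2^{-1/\delta})\tau$, which becomes negative as $\tau\to a(1-\delta)$ for every $\delta<1$; the square-root weight is then imaginary and both of your key bounds fail. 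The comparisons only close with the constant $(\tfrac12)^{2+1/\sigma}$ (for $\sigma=1$ this yields the clean identities $\delta(\tau)-\delta=\frac{ac-\tau}{2a}$ and $a(1-\delta(\tau))-\tau=\frac{ac-\tau}{2}$); this is evidently a misprint in the statement, but a proof that actually carried out the tuning would have detected it. (b) Even with the corrected constant, the admissibility condition you impose, $\tau<\frac{a(1-\delta(\tau))^\sigma}{2^\sigma-1}$, is false for $\sigma>1$: with $\sigma=2$, $\delta=\tfrac12$, $a=1$, $\tau=0.08$ one finds $\delta(\tau)\approx0.664$ and $(1-\delta(\tau))^2/3\approx0.038<\tau$. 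What is true, and what the argument needs, is only the weaker condition $\tau<a(1-\delta(\tau))^\sigma$, i.e.\ that $(\delta(\tau),\tau)$ lies in the full region where the weight $(1-\delta')^\sigma\sqrt{1-|t|/(a(1-\delta')^\sigma)}$ is positive, over which the supremum defining $E_a$ must be read. Until the two lower bounds are derived with explicit constants (the factor $2^{2\sigma+3}$ has to be accounted for), what you have is an accurate table of contents for the proof rather than the proof itself.
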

     Now we can state the main theorem of  this section.
     \begin{theo}\label{solution-continuity}
       Let $\sigma\geq1, q>s+\frac12$ and  let $q_1$ satisfy the condition (\ref{condition3}). Assume
       $(u_0,\rho)\in G^1_{\sigma,q}(\mathbb R)\times G^1_{\sigma,q_1}(\mathbb R).$
       Then the data-to-solution map $(u_0,\rho_0)\mapsto (u,\rho)$ of the system (1.1) is continuous from
       $G^1_{\sigma,q}\times G^1_{\sigma,q_1}$  into  the  solution space.
     \end{theo}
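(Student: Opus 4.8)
The plan is to establish continuity of the data-to-solution map by a direct estimate on the difference $z^n - z^\infty = (u^n - u^\infty, \rho^n - \rho^\infty)$, working in the solution norm $\|\cdot\|_{E_T}$ defined above. The starting observation is that both $z^n$ and $z^\infty$ solve the same abstract Cauchy problem $z_t = F(z)$ with the nonlinearity $F$ from \eqref{Def-F}, only with different initial data. Writing the solutions in integral (Duhamel) form,
\begin{align*}
z^n(t) - z^\infty(t) = (z_0^n - z_0^\infty) + \int_0^t \big(F(z^n(\tau)) - F(z^\infty(\tau))\big)\, \mathrm d\tau,
\end{align*}
I would take the $G^\delta_{\sigma,q}\times G^\delta_{\sigma,q_1}$ norm of both sides at a fixed $\delta\in(0,1)$, and control the integral term using the Lipschitz estimate \eqref{F-condition2} proved in Theorem \ref{local-two-component}, which gives $\|F(z^n)-F(z^\infty)\|_{\delta'} \leq \frac{L}{(\delta-\delta')^\sigma}\|z^n-z^\infty\|_\delta$ for any $0<\delta'<\delta$, with $L$ depending only on $\|z_0^\infty\|_1$ and $R$.

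\textbf{The central device} is the auxiliary-variable trick encoded in Lemma \ref{Luo3.7}. The difficulty is that the Lipschitz bound loses regularity (it maps $\delta$ down to $\delta'$ with a $(\delta-\delta')^{-\sigma}$ singularity), so a naive Gronwall argument does not close. Following \cite{Luo-Y}, I would fix the target radius $\delta$ and integrate along the sliding radius $\delta(\tau)\in(\delta,1)$ given in Lemma \ref{Luo3.7}, so that inside the integral the solution norm is evaluated at the larger radius $\delta(\tau)$ while the left side lives at $\delta$. This converts the singular integral $\int_0^t \frac{\|z^n-z^\infty\|_{\delta(\tau)}}{(\delta(\tau)-\delta)^\sigma}\,\mathrm d\tau$ into a bound of the form $\frac{a\,2^{2\sigma+3}}{(1-\delta)^\sigma}\sqrt{\tfrac{a(1-\delta)^\sigma}{a(1-\delta)^\sigma-t}}\,\|z^n-z^\infty\|_{E_a}$, exactly matching the weight $(1-\delta)^\sigma\sqrt{1 - |t|/(T(1-\delta)^\sigma)}$ built into the norm $\|\cdot\|_{E_{q,T}}$. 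Multiplying through by that weight and taking the supremum over $|t|<\frac{T(1-\delta)^\sigma}{2^\sigma-1}$ and over $\delta\in(0,1)$, the integral contributes a factor proportional to $T$ times $\|z^n-z^\infty\|_{E_T}$.

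\textbf{Closing the estimate} then reduces to choosing $T$ small. After the two steps above I expect an inequality of the schematic shape
\begin{align*}
\|z^n - z^\infty\|_{E_T} \leq C\,\|z_0^n - z_0^\infty\|_1 + C\,L\,T\,\|z^n - z^\infty\|_{E_T},
\end{align*}
where the first term comes from the data difference (the weight $(1-\delta)^\sigma\sqrt{\cdots}\leq 1$ bounds it by the $X_1$-norm of $z_0^n-z_0^\infty$) and the second from the Duhamel integral. Taking $T = T(\|z_0^\infty\|_1)$ small enough that $C L T \leq \tfrac12$ — which is consistent with the lifespan $T_0$ from Remark \ref{T-estimate}, since $L$ depends only on $\|z_0^\infty\|_1$ and the fixed $R$ — I can absorb the last term and obtain $\|z^n-z^\infty\|_{E_T} \leq 2C\,\|z_0^n - z_0^\infty\|_1$. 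Since $\|z_0^n-z_0^\infty\|_1 = \|u_0^n-u_0^\infty\|_{G^1_{\sigma,q}}+\|\rho_0^n-\rho_0^\infty\|_{G^1_{\sigma,q_1}}\to 0$ by hypothesis, continuity follows.

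\textbf{The main obstacle} I anticipate is bookkeeping rather than conceptual: one must verify that all $z^n$ and $z^\infty$ are genuinely defined on a \emph{common} time interval and that their $E_a$-norms are uniformly bounded (so that $L$ and $R$ can be chosen uniformly in $n$), which relies on the uniform lower bound for the lifespan $T_0\approx 1/(\|z_0\|_1+1)$ established in Theorem \ref{local-two-component}, together with the convergence $\|z_0^n\|_1\to\|z_0^\infty\|_1$. Once this uniformity is in place, the weighted estimate and the absorption argument are routine applications of Lemma \ref{Luo3.7} and inequality \eqref{F-condition2}.
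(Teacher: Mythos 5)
Your proposal is correct and follows essentially the same route as the paper's proof: Duhamel form, the Lipschitz estimate \eqref{F-condition2}, the sliding-radius integral bound of Lemma \ref{Luo3.7}, absorption via $L T 2^{2\sigma+3}<\tfrac12$, and the uniform common lifespan obtained from $\|z_0^n\|_1\to\|z_0^\infty\|_1$. No substantive differences.
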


     \begin{proof}
     Without loss of generality, we may assume that $t\geq0.$ As in the proof of Theorem \ref{local-two-component}, we use the
     notations $z^n=(u^n,\rho^n)^{\mathrm T}, z_0^n=(u_0^n,\rho_0^n)^{\mathrm T},$
     $\|z^n\|_\delta=\|u^n\|_{G^\delta_{\sigma,q}}+\|\rho^n\|_{G^\delta_{\sigma,q_1}}$ and
     $\|z\|_{E_T}= \|u^n\|_{E_{q,T}}+\|\rho^n\|_{E_{q_1,T}}.$ Define that
     \begin{align*}\label{T-def}
        T^\infty =\frac1{2^{2\sigma+5}C_{q,q_1,s,\alpha,\kappa}\,(\mathrm e^{-\sigma}\sigma^\sigma+2)(\|z_0^\infty\|_1+1)},\ \ \
        T^n =\frac1{2^{2\sigma+5}C_{q,q_1,s,\alpha,\kappa}\,(\mathrm e^{-\sigma}\sigma^\sigma+2)(\|z_0^n\|_1+1)},
     \end{align*}
    where $C_{q,q_1,s,\alpha,\kappa}$ is given in  (\ref{F-condition2}).
    Since  $\|z_0^n-z_0^\infty\|_1 \xrightarrow{n\rightarrow\infty} 0,$ it follows that there exists a constant $N$ such that
    if $n\geq N,$ we can have
    \begin{align}
      \|z_0^n\|_1\leq\|z_0^\infty\|_1+1.
    \end{align}
    By setting
    \begin{align}
     T =\frac1{2^{2\sigma+5}C_{q,q_1,s,\alpha,\kappa}\,(\mathrm e^{-\sigma}\sigma^\sigma+2)(\|z_0^n\|_1+2)},
    \end{align}
    we deduce from (\ref{T-def}) that  $T<\min\{T^n,T^\infty\}$ for  any $n\geq N.$ As in the proof of Theorem \ref{local-two-component},
    we see that $T^n$ and $T^\infty$ are the existence time of the solutions $z^n$ and $z^\infty$ corresponding to
    $z_0^n$ and $z_0^\infty$ respectively. Thus, we can see, for any $n\geq N,$
     \begin{align*}
        z^\infty(t,x)=z_0^\infty(x)+\int_0^tF(z^\infty(t,x))\mathrm d\tau,\ 0\leq t<\frac{T(1-\delta)^\sigma}{2^\sigma-1},\\
        z^n(t,x)=z_0^n(x)+\int_0^tF(z^n(t,x))\mathrm d\tau,\ 0\leq t<\frac{T(1-\delta)^\sigma}{2^\sigma-1},
     \end{align*}
     where $F$ is given in (\ref{Def-F}). Therefore, for any $0\leq t<\frac{T(1-\delta)^\delta}{2^\sigma-1}$ and $0<\delta<1,$
     we have
     \begin{align}\label{zuocha}
      \| z^n(t)-z^\infty(t)\|_\delta\leq \| z^n_0-z^\infty_0\|_\delta
      +\int_0^t\|F(z^n(\tau))-F(z^\infty(\tau))\|_\delta\mathrm d \tau.
     \end{align}
     Define that $\delta(t)=\frac12(1+\delta)+(\frac12)^{2+\frac1\delta}\big\{[(1-\delta)^\sigma-\frac t T]^{\frac1\sigma}-[(1-\delta)^\sigma +(2^{\sigma+1}-1)\frac t T]^{\frac1\sigma}\big\}.$
     By virtue of  Lemma \ref{Luo3.7} with $a=T,$  we see that $\delta<\delta(\tau)<1.$ Taking advantage of (\ref{F-condition2}), we obtain
     \begin{align}
        \|F(z^n(\tau))-F(z^\infty(\tau))\|_\delta\leq L\frac{\|z^n(\tau)-z^\infty(\tau)\|_{\delta(\tau)}}{(\delta(\tau)-\delta)^\sigma}
     \end{align}
     with $L=2C_{q,q_1,a,s,\alpha,\kappa}\,(\mathrm e^{-\sigma}\sigma^\sigma+2)(\|z_0\|_1+1).$
     Plugging it into (\ref{zuocha}) yields that
     \begin{align*}
      \| z^n(t)-z^\infty(t)\|_\delta\leq \| z^n_0-z^\infty_0\|_\delta
      +L\int_0^t\frac{\|z^n(\tau)-z^\infty(\tau)\|_{\delta(\tau)}}{(\delta(\tau)-\delta)^\sigma}\mathrm d \tau.
     \end{align*}
 Applying  Lemma \ref{Luo3.7},  we deduce that
\begin{align*}
      \| z^n(t)-z^\infty(t)\|_\delta\leq \| z^n_0-z^\infty_0\|_\delta
      +L\frac{T2^{2\sigma+3}\|z^n-z^\infty\|_{E_T}}{(\delta(1-\delta)^\sigma}
      \sqrt{\frac{T(1-\delta)^\sigma}{T(1-\delta)^\sigma-t}}\, .
     \end{align*}
   Noting that $LT2^{2\sigma+3}<\frac12,$  we can obtain
     \begin{align*}
      \| z^n(t)-z^\infty(t)\|_\delta\leq \| z^n_0-z^\infty_0\|_\delta
      +\frac1{2(1-\delta)^\delta}\|z^n-z^\infty\|_{E_T}
      \sqrt{\frac{T(1-\delta)^\sigma}{T(1-\delta)^\sigma-t}}\, ,
     \end{align*}
     which leads to
     \begin{align*}
     \| z^n(t)-z^\infty(t)\|_\delta(1-\delta)^\sigma\sqrt{1-\frac t{T(1-\delta)^\sigma}}&\leq
     \| z^n_0-z^\infty_0\|_\delta(1-\delta)^\sigma\sqrt{1-\frac t{T(1-\delta)^\sigma}}\
     + \frac12 \|z^n-z^\infty\|_{E_T}\\
    &\leq \|z_0^n-z_0^\infty\|_1+\frac12 \|z^n-z^\infty\|_{E_T}.
     \end{align*}
     Note that the right-hand side of the above inequality is independent of $t$ and $\delta.$
     By taking the supremum over $0<\delta<1, 0<t<\frac{T(1-\delta)^\sigma}{2^\sigma-1},$ we obtain
     \begin{align*}
      \|z^n-z^\infty\|_{E_T}\leq \|z_0^n-z_0^\infty\|_1+\frac12 \|z^n-z^\infty\|_{E_T},
     \end{align*}
     or
     \begin{align*}
      \|z^n-z^\infty\|_{E_T}\leq 2\|z_0^n-z_0^\infty\|_1.
     \end{align*}
    This inequality holds true for any $n\geq N$ and completes the proof of Theorem \ref{solution-continuity}.
     \end{proof}
\section{Global  Gevrey regularity and analyticity}
We firstly introduce a lemma which is crucial to deal with the convection term of the system (1.1). The idea comes
from \cite{Lev-O}, but we  release the restriction on $r$.
\begin{lemm}\label{commutator}
 Let $\delta\geq0, \, \sigma\geq1$ and $r>1+\frac d 2.$ Let $u\in  G^{\delta}_{\sigma,r+\frac1{2\sigma}}(\mathbb R^d)$ and $w\in G^{\delta}_{\sigma,r+\frac1{2\sigma}}(\mathbb R^d)$. Then one has the estimate
 \begin{align*}
   |\langle A^r e^{\delta A^{\frac1\sigma}}(u\cdot \nabla w), A^r e^{\delta A^{\frac1\sigma}} w\rangle|
   \leq C\|A^ru\| \|A^rw\|^2+C\delta \big(\|A^r e^{\delta A^{\frac1\sigma}}u\|
   \|A^{r+\frac1{2\sigma}} e^{\delta A^{\frac1\sigma}}w\|^2+\\
   +\|A^{r+\frac1{2\sigma}} e^{\delta A^{\frac1\sigma}}u\|\|A^r e^{\delta A^{\frac1\sigma}}w\|
   \|A^{r+\frac1{2\sigma}} e^{\delta A^{\frac1\sigma}}w\|
   \big),
  \end{align*}
  where $\langle\cdot,\cdot\rangle$ denotes the scalar product of $L^2(\mathbb R^d),$ and $\|\cdot\|:=\|\cdot\|_{L^2(\mathbb R^d)}.$
  \end{lemm}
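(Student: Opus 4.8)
The plan is to pass to the Fourier side and isolate a transport part from a genuine commutator. Abbreviate $\Lambda:=A^r e^{\delta A^{\frac1\sigma}}$, with Fourier symbol $m(\xi)=\langle\xi\rangle^r e^{\delta\langle\xi\rangle^{\frac1\sigma}}$ where $\langle\xi\rangle=(1+|\xi|^2)^{\frac12}$, and set $I:=\langle\Lambda(u\cdot\nabla w),\Lambda w\rangle$. Writing $\Lambda(u\cdot\nabla w)=(u\cdot\nabla)\Lambda w+[\Lambda,u\cdot\nabla]w$ splits $I=I_1+I_2$. The two elementary facts that make the hypothesis $\sigma\ge1$ operative are the subadditivity $\langle\xi\rangle^{\frac1\sigma}\le\langle\eta\rangle^{\frac1\sigma}+\langle\xi-\eta\rangle^{\frac1\sigma}$ (a consequence of $\langle\xi\rangle\le\langle\eta\rangle+\langle\xi-\eta\rangle$ together with the concavity of $t\mapsto t^{\frac1\sigma}$) and the mean-value bound $|e^{\delta a}-e^{\delta b}|\le\delta|a-b|\,e^{\delta\max(a,b)}$; both serve to extract explicit factors of $\delta$.

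For $I_1$ I would integrate by parts. Since $\langle(u\cdot\nabla)\Lambda w,\Lambda w\rangle=-\tfrac12\langle(\nabla\cdot u)\Lambda w,\Lambda w\rangle$, it is controlled by $\tfrac12\|\nabla\cdot u\|_{L^\infty}\|\Lambda w\|^2$, and the embedding $H^r\hookrightarrow W^{1,\infty}$ (valid since $r>1+\tfrac d2$) gives $\|\nabla\cdot u\|_{L^\infty}\le C\|A^r u\|$. For the surviving factor I would use the scalar inequality $e^{2\delta\langle\xi\rangle^{\frac1\sigma}}\le 1+2\delta\langle\xi\rangle^{\frac1\sigma}e^{2\delta\langle\xi\rangle^{\frac1\sigma}}$ to peel the weight:
\[
\|\Lambda w\|^2=\|A^r e^{\delta A^{\frac1\sigma}}w\|^2\le\|A^r w\|^2+2\delta\,\|A^{r+\frac1{2\sigma}}e^{\delta A^{\frac1\sigma}}w\|^2 .
\]
After bounding $\|A^r u\|\le\|A^r e^{\delta A^{\frac1\sigma}}u\|$, this already produces the weight-free term $C\|A^r u\|\|A^r w\|^2$ and the first $\delta$-term of the statement.

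For $I_2$ I would peel the weights by comparison with the unweighted ($\delta=0$) commutator, writing $I_2=I_2\big|_{\delta=0}+\big(I_2-I_2\big|_{\delta=0}\big)$. The term $I_2\big|_{\delta=0}=\langle[A^r,u\cdot\nabla]w,A^r w\rangle$ is the classical Kato--Ponce commutator, bounded by $C\big(\|\nabla u\|_{L^\infty}\|A^r w\|+\|A^r u\|\|\nabla w\|_{L^\infty}\big)\|A^r w\|$, which under $r>1+\tfrac d2$ is exactly of the form $C\|A^r u\|\|A^r w\|^2$. In the difference each weight $e^{\delta\langle\cdot\rangle^{\frac1\sigma}}$ is replaced by $1$ plus a remainder of size $O(\delta)$ through $e^{\delta s}-1=\delta s\int_0^1 e^{\theta\delta s}\,d\theta$; combined with the symbol estimate $|\langle\xi\rangle^r-\langle\eta\rangle^r|\le C\langle\xi-\eta\rangle(\langle\xi\rangle^{r-1}+\langle\eta\rangle^{r-1})$ and the exponential-gap bound
\[
\big|e^{\delta\langle\xi\rangle^{\frac1\sigma}}-e^{\delta\langle\eta\rangle^{\frac1\sigma}}\big|\le\delta\,\langle\xi-\eta\rangle^{\frac1\sigma}\,e^{\delta\langle\xi-\eta\rangle^{\frac1\sigma}}e^{\delta\langle\eta\rangle^{\frac1\sigma}},
\]
this manufactures an explicit $\delta$. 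Splitting the gap weight $\langle\xi-\eta\rangle^{\frac1\sigma}=\langle\xi-\eta\rangle^{\frac1{2\sigma}}\langle\xi-\eta\rangle^{\frac1{2\sigma}}$ and distributing the two halves, then estimating the frequency integral by Cauchy--Schwarz and Young's convolution inequality, is what is meant to deliver the second $\delta$-term $C\delta\|A^{r+\frac1{2\sigma}}e^{\delta A^{\frac1\sigma}}u\|\,\|A^r e^{\delta A^{\frac1\sigma}}w\|\,\|A^{r+\frac1{2\sigma}}e^{\delta A^{\frac1\sigma}}w\|$.

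I expect the main obstacle to be precisely this weighted part of $I_2$. One must exploit the top-order transport cancellation so that no term carrying a full extra derivative on $w$ survives, keep the $\delta=0$ contribution genuinely weight-free, and split the single gap weight so that exactly $\tfrac1{2\sigma}$ of a derivative is deposited on $u$ and on one copy of $w$ while the output factor stays at order $r$. This forces a frequency-region decomposition into low-high, high-low and high-high pieces, always placing the $L^\infty$ bound (hence the requirement $r>1+\tfrac d2$) on the lowest-frequency factor so the derivative count closes; by comparison the transport term $I_1$ and the scalar inequalities above are routine.
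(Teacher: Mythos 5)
Your proposal is correct and follows essentially the same route as the paper: split off the transport term and integrate by parts, pass the commutator to the Fourier side, bound the symbol difference $m(\xi)-m(\eta)$ by a weight-free piece plus an explicit $\delta$-piece (the paper does this in one stroke via the mean value theorem applied to $\theta\mapsto(1+\theta^2)^{r/2}e^{\delta(1+\theta^2)^{1/(2\sigma)}}$, together with $e^{y}\le 1+ye^{y}$ and subadditivity of $(1+|\cdot|^2)^{1/(2\sigma)}$), and distribute the extra $\tfrac1{2\sigma}$ derivative between $u$ and one copy of $w$. The only cosmetic differences are that you quote Kato--Ponce for the unweighted commutator and foresee a paraproduct-type frequency decomposition, whereas the paper estimates all four resulting integrals directly by Cauchy--Schwarz using the integrability of $(1+|\eta|^2)^{1-r}$ for $r>1+\tfrac d2$.
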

  It is helpful to introduce  two lemmas.
  \begin{lemm}\label{cha-gu-ji}
   Let $r\geq1,\sigma\geq1$ and $\delta\geq0.$ Then for  any real $\xi,\eta,$  there holds
   \begin{align}\label{difference-estimate}
   &\big|(1+\xi^2)^{\frac r2}\mathrm e^{\delta(1+\xi^2)^{\frac1{2\sigma}}}-(1+\eta^2)^{\frac r2}
   \mathrm e^{\delta(1+\eta^2)^{\frac1{2\sigma}}}\big|\nonumber\\
   &\ \ \ \ \  \leq C_r \big|\xi-\eta\big|\Big\{(1+|\xi-\eta|^2)^{\frac{r-1}2}+(1+|\eta|^2)^{\frac{r-1}2}\nonumber\\
   &\ \ \ \ \  \ \ \ \ \ \ \  \  \ \ \  \ \ \ \ \ \ \ \ \ +\delta\big[
   (1+|\xi-\eta|^2)^{\frac{r-1}2+\frac1{2\sigma}}+(1+|\eta|^2)^{\frac{r-1}2+\frac1{2\sigma}}
   \big]\mathrm e^{\delta(1+|\xi-\eta|^2)^\frac1{2\sigma}}\mathrm e^{\delta(1+|\eta|^2)^\frac1{2\sigma}}
   \Big\}.
   \end{align}
   \end{lemm}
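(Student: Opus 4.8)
The plan is to read the left-hand side as the increment of the single smooth even function
$$\phi(\xi)=(1+\xi^2)^{\frac r2}\,\mathrm e^{\delta(1+\xi^2)^{\frac1{2\sigma}}}$$
and to estimate it by the mean value theorem. The key device for producing the two $\delta$-free terms on the right \emph{without} any exponential weight is to peel off the purely polynomial part first, writing
$$\phi(\xi)-\phi(\eta)=\big[f(\xi)-f(\eta)\big]+\big[\psi(\xi)-\psi(\eta)\big],\qquad f(\xi)=(1+\xi^2)^{\frac r2},\quad \psi(\xi)=(1+\xi^2)^{\frac r2}\big(\mathrm e^{\delta(1+\xi^2)^{\frac1{2\sigma}}}-1\big).$$
The first bracket carries no exponential and will yield the first two terms of (\ref{difference-estimate}); the second bracket carries the overall factor $\delta$ and the exponentials, and will yield the last term.

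For the polynomial bracket I would apply the mean value theorem to $f$: since $f'(\xi)=r\xi(1+\xi^2)^{\frac r2-1}$ and $|\xi|\le(1+\xi^2)^{\frac12}$, one has $|f'(\zeta)|\le r(1+\zeta^2)^{\frac{r-1}2}$, so that $|f(\xi)-f(\eta)|\le r|\xi-\eta|\sup_\zeta(1+\zeta^2)^{\frac{r-1}2}$, the supremum being over the segment joining $\eta$ and $\xi$. For the exponential bracket I would differentiate $\psi$, use the elementary inequality $\mathrm e^{x}-1\le x\,\mathrm e^{x}$ ($x\ge0$) to replace the factor $\mathrm e^{\delta(1+\zeta^2)^{\frac1{2\sigma}}}-1$ by $\delta(1+\zeta^2)^{\frac1{2\sigma}}\mathrm e^{\delta(1+\zeta^2)^{\frac1{2\sigma}}}$, and again bound $|\zeta|\le(1+\zeta^2)^{\frac12}$. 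This produces $|\psi'(\zeta)|\le C_r\,\delta\,(1+\zeta^2)^{\frac{r-1}2+\frac1{2\sigma}}\mathrm e^{\delta(1+\zeta^2)^{\frac1{2\sigma}}}$, whence $|\psi(\xi)-\psi(\eta)|\le C_r\,\delta\,|\xi-\eta|\sup_\zeta(1+\zeta^2)^{\frac{r-1}2+\frac1{2\sigma}}\mathrm e^{\delta(1+\zeta^2)^{\frac1{2\sigma}}}$. It is exactly here that the factor $\delta$ and the raised power $\frac{r-1}2+\frac1{2\sigma}$ of the target estimate emerge.

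It then remains to distribute the supremum over $a=|\xi-\eta|$ and $b=|\eta|$, using that $\zeta$ lies between $\eta$ and $\xi$, hence $|\zeta|\le a+b$. For the polynomial powers $p=\frac{r-1}2$ and $p=\frac{r-1}2+\frac1{2\sigma}$ (both $\ge0$) the routine bound $1+(a+b)^2\le 2\big[(1+a^2)+(1+b^2)\big]$ combined with $(x+y)^p\le C_p(x^p+y^p)$ gives $(1+\zeta^2)^p\le C_r\big[(1+a^2)^p+(1+b^2)^p\big]$. The delicate point, which I expect to be the main obstacle, is the exponential factor: to keep \emph{exactly} the coefficient $\delta$ in each exponential (rather than an inflated $2^{1/(2\sigma)}\delta$) I must establish the sharp subadditivity
$$\big(1+(a+b)^2\big)^{\frac1{2\sigma}}\le(1+a^2)^{\frac1{2\sigma}}+(1+b^2)^{\frac1{2\sigma}}\qquad(a,b\ge0,\ \sigma\ge1).$$
I would prove this first for the exponent $\tfrac12$ via $1+(a+b)^2\le\big((1+a^2)^{\frac12}+(1+b^2)^{\frac12}\big)^2$, which reduces to $2ab\le 1+2\sqrt{(1+a^2)(1+b^2)}$ and holds since $\sqrt{(1+a^2)(1+b^2)}\ge ab$ (the margin is in fact $\ge1$); then I would raise both sides to the power $\tfrac1\sigma\in(0,1]$ and apply $(x+y)^{1/\sigma}\le x^{1/\sigma}+y^{1/\sigma}$. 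This gives $\mathrm e^{\delta(1+\zeta^2)^{\frac1{2\sigma}}}\le\mathrm e^{\delta(1+a^2)^{\frac1{2\sigma}}}\mathrm e^{\delta(1+b^2)^{\frac1{2\sigma}}}$, and multiplying the polynomial and exponential bounds (both nonnegative upper bounds of increasing factors of $1+\zeta^2$) yields precisely the last term. Collecting the three estimates and absorbing all constants into $C_r$ establishes (\ref{difference-estimate}).
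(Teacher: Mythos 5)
Your proposal is correct and follows essentially the same route as the paper's proof: both rest on the mean value theorem for $\theta\mapsto(1+\theta^2)^{\frac r2}\mathrm e^{\delta(1+\theta^2)^{\frac1{2\sigma}}}$, extract the $\delta$-free terms via the inequality $\mathrm e^{y}-1\leq y\,\mathrm e^{y}$, and redistribute the intermediate point onto $\xi-\eta$ and $\eta$ using the same subadditivity $\big(1+(a+b)^2\big)^{\frac1{2\sigma}}\leq(1+a^2)^{\frac1{2\sigma}}+(1+b^2)^{\frac1{2\sigma}}$. Splitting the function into its polynomial part and exponential remainder before differentiating, rather than splitting the derivative bound afterwards as the paper does, is only a cosmetic reorganization (and if anything cleaner, since it sidesteps the paper's slightly loose claim that $f'$ itself is monotone).
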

\begin{proof}
Without loss of generality, take $\xi>\eta\geq0.$
Set $f(\theta):=(1+\theta^2)^{\frac r2}\mathrm e^{\delta(1+\theta^2)^{\frac1{2\sigma}}}.$ By the mean value
theorem, we see
\begin{align*}
   (1+\xi^2)^{\frac r2}\mathrm e^{\delta(1+\xi^2)^{\frac1{2\sigma}}}-(1+\eta^2)^{\frac r2}
   \mathrm e^{\delta(1+\eta^2)^{\frac1{2\sigma}}}\leq (\xi-\eta)\sup\limits_{\theta\in[\eta,\xi]}|f'(\theta)|.
\end{align*}
Computing  $f'$ and using  the fact that  $e^y\leq 1+y\mathrm e^y$ for $y\geq0$ yield
\begin{align}\label{supremum}
  f'(\theta)&=r\theta(1+\theta^2)^{\frac r2-1}\mathrm e^{\delta(1+\theta^2)^{\frac1{2\sigma}}}+\frac{\delta}{\sigma}
  \theta (1+\theta^2)^{\frac r2+\frac1{2\sigma}-1}\mathrm e^{\delta(1+\theta^2)^{\frac1{2\sigma}}}\nonumber\\
  &\leq r(1+\theta^2)^{\frac{r-1}2}
  \big[1+\delta(1+\theta^2)^{\frac1{2\sigma}}\mathrm e ^{\delta(1+\theta^2)^{\frac1{2\sigma}}}\big]+
 \frac{\delta}{\sigma}(1+\theta^2)^{\frac {r-1}2+\frac1{2\sigma}}
 \mathrm e^{\delta(1+\theta^2)^{\frac1{2\sigma}}} \nonumber\\
&= r(1+\theta^2)^{\frac{r-1}2}+\delta(r+\frac1\sigma)(1+\theta^2)^{\frac {r-1}2+\frac1{2\sigma}}\mathrm e^{\delta(1+\theta^2)^{\frac1{2\sigma}}}.
\end{align}
It is easy to verify that $f'$ is a monotonically increasing function for $r\geq 1,$ and hence the supremum in (\ref{supremum}) is attained when $\theta=\xi.$ For arbitrary non-negative $\xi$ and $\eta,$ we have
\begin{align}\label{estimate1}
  &\big| (1+\xi^2)^{\frac r2}\mathrm e^{\delta(1+\xi^2)^{\frac1{2\sigma}}}-(1+\eta^2)^{\frac r2}
   \mathrm e^{\delta(1+\eta^2)^{\frac1{2\sigma}}}\big|\nonumber\\
  &\ \  \leq \big|\xi-\eta\big|\Big(
   r\big[(1+\xi^2)^{\frac{r-1}2}+ (1+\eta^2)^{\frac{r-1}2}\big]\nonumber\\
   &~~~~~~~~~~~~~~~~~~~~+\delta(r+\frac1\sigma)\big[(1+\xi^2)^{\frac {r-1}2+\frac1{2\sigma}}
   \mathrm e^{\delta(1+\xi^2)^{\frac1{2\sigma}}}
  +(1+\eta^2)^{\frac {r-1}2+\frac1{2\sigma}}\mathrm e^{\delta(1+\eta^2)^{\frac1{2\sigma}}}\big]\Big).
   \end{align}
     Note that for any  $\xi,\eta,a,b\in\mathbb R,$
   \begin{align}\label{estimate2}
   |\xi|^\rho\leq\left\{\begin{array}{lll}
    |\xi-\eta|^\rho+|\eta|^\rho,\ \ \ \  \ \ \  \ \ \ \ \ \  &\mathrm{when} \ \ \ \rho\in(0,1],\\
    2^{\rho-1}( |\xi-\eta|^\rho+|\eta|^\rho),\ \ \ \  \ \ \ &\mathrm{when} \ \ \ \rho >1,
    \end{array}\right.
   \end{align}
   and
   \begin{align}\label{ji-ben-bu-deng-shi}
    (1+(a+b)^2)^\frac12 \leq (1+a^2)^\frac12+(1+b^2)^\frac12.
   \end{align}
    Since $\sigma\geq1,$ it follows that
    \begin{align*}
(1+\xi^2)^{\frac1{2\sigma}}\leq \big((1+|\xi-\eta|^2)^\frac12+(1+|\eta|^2)^\frac12\big)^\frac1\sigma
 \leq(1+|\xi-\eta|^2)^\frac1{2\sigma}+(1+|\eta|^2)^\frac1{2\sigma}.
    \end{align*}
   which leads to
   \begin{align}\label{estimate3}
 \mathrm e^{\delta(1+\xi^2)^{\frac1{2\sigma}}}\leq \mathrm e^{\delta(1+|\xi-\eta|^2)^{\frac1{2\sigma}}}
 \mathrm e^{\delta(1+\eta^2)^{\frac1{2\sigma}}}
 \end{align}
 Combining the estimates (\ref{estimate1}), (\ref{estimate2}) and (\ref{estimate3}), we obtain (\ref{difference-estimate})
  and  complete the proof of Lemma \ref{cha-gu-ji}.
\end{proof}

Now we introduce a lemma to deal with the interpolation of the Sobolev spaces and the Gervey spaces. The proof
of this lemma is similar to that of  Lemma 8 in \cite{O-Titi}.
\begin{lemm}\label{interpolation-S-G lemma} For any $\delta\geq0,\, \sigma \geq1,\, l>0$ and $r\in\mathbb R,$ the following
estimate holds true:
 \begin{align}\label{interpolation-S-G}
   \|u\|_{ G^\delta_{\sigma,r}}\leq \sqrt{\mathrm e} \|u\|_{H^r}+(2\delta)^{\frac l2} \|u\|_{ G^\delta_{\sigma,r+\frac l{2\sigma}}}.
 \end{align}
 \end{lemm}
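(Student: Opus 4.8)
The plan is to pass to the Fourier side, where both norms become weighted $L^2$-norms, and to reduce the whole estimate to one elementary scalar inequality combined with Minkowski's inequality. Recalling from the Remark after Definition \ref{G-defi} that
$$\|u\|_{G^\delta_{\sigma,r}}=\Big\|(1+|\xi|^2)^{\frac r2}\,\mathrm e^{\delta(1+|\xi|^2)^{\frac1{2\sigma}}}\hat u(\xi)\Big\|_{L^2}\quad\text{and}\quad \|u\|_{H^r}=\big\|(1+|\xi|^2)^{\frac r2}\hat u(\xi)\big\|_{L^2},$$
I would set $a=a(\xi):=(1+|\xi|^2)^{\frac1{2\sigma}}\ge1$, so that the only quantity to be controlled is the scalar multiplier $\mathrm e^{\delta a}$.

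The heart of the matter is the pointwise inequality
$$\mathrm e^{\delta a}\le \sqrt{\mathrm e}+(2\delta a)^{\frac l2}\,\mathrm e^{\delta a},\qquad a\ge0,\ \delta\ge0,\ l>0,$$
which I would prove by a dichotomy on the size of $2\delta a$. If $2\delta a\ge1$, then $(2\delta a)^{l/2}\ge1$ (here we use $l>0$), so the second term on the right already dominates $\mathrm e^{\delta a}$. If instead $2\delta a<1$, then $\delta a<\frac12$ yields $\mathrm e^{\delta a}<\mathrm e^{1/2}=\sqrt{\mathrm e}$, so the first term alone suffices. This threshold is exactly what produces the constant $\sqrt{\mathrm e}$, rather than $\mathrm e$, in front of the Sobolev norm.

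Finally I would multiply the scalar inequality by the nonnegative weight $(1+|\xi|^2)^{\frac r2}|\hat u(\xi)|$ and take $L^2$-norms. Since $(2\delta a)^{\frac l2}(1+|\xi|^2)^{\frac r2}=(2\delta)^{\frac l2}(1+|\xi|^2)^{\frac12(r+\frac l{2\sigma})}$, the term $(2\delta a)^{\frac l2}\mathrm e^{\delta a}(1+|\xi|^2)^{\frac r2}$ is precisely the Fourier multiplier of $G^\delta_{\sigma,r+\frac l{2\sigma}}$ scaled by $(2\delta)^{\frac l2}$. Applying Minkowski's inequality to the two-term upper bound then gives
$$\|u\|_{G^\delta_{\sigma,r}}\le \sqrt{\mathrm e}\,\|u\|_{H^r}+(2\delta)^{\frac l2}\|u\|_{G^\delta_{\sigma,r+\frac l{2\sigma}}},$$
which is (\ref{interpolation-S-G}). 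There is no genuine obstacle here: the entire content is the choice of the scalar inequality and its threshold $2\delta a\gtrless1$, after which everything is routine bookkeeping on the Fourier multipliers.
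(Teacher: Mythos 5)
Your proof is correct and is essentially the paper's own argument: the paper applies the scalar bound $\mathrm e^{y}\leq \mathrm e+y^{l}\mathrm e^{y}$ (with $y=2\delta(1+|\xi|^2)^{\frac1{2\sigma}}$, proved by the same dichotomy $y\gtrless 1$) to the \emph{squared} norm on the Fourier side and then uses $\sqrt{A+B}\leq\sqrt A+\sqrt B$, whereas you apply the square-rooted version of the same inequality directly to the multiplier and finish with the triangle inequality in $L^2$. The two routes are interchangeable and yield identical constants.
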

 \begin{proof}
 Noting that $\mathrm e ^y\leq \mathrm e+y^l\mathrm e^y$  for any $l>0, y\geq0$, we have
 \begin{align*}
 \|u\|^2_{ G^\delta_{\sigma,r}}&=\int (1+|\xi|^2)^r\mathrm e^{2\delta(1+|\xi|^2)^\frac1{2\sigma}}|\hat u(\xi)|^2
 \mathrm d \xi\\
 &\leq \int (1+|\xi|^2)^r\big( \mathrm e+(2\delta)^l(1+|\xi|^2)^\frac l{2\sigma}
 \mathrm e^{2\delta(1+|\xi|^2)^\frac1{2\sigma}}\big)|\hat u(\xi)|^2\mathrm d \xi\\
 &=\mathrm e\int (1+|\xi|^2)^r|u(\xi)|^2
 \mathrm d \xi +(2\delta)^l\int (1+|\xi|^2)^{r+\frac l{2\sigma}}
 \mathrm e^{2\delta(1+|\xi|^2)^\frac1{2\sigma}}|\hat u(\xi)|^2 \mathrm d \xi\\
 &=\mathrm e \|u\|^2_{H^r}+(2\delta)^l \|u\|^2_{ G^\delta_{\sigma,r+\frac l{2\sigma}}},
 \end{align*}
 which leads to (\ref{interpolation-S-G}).
 \end{proof}

{\bf Proof of Lemma \ref{commutator}.} The idea comes from \cite{Lev-O}. For simplicity, we only consider the case $d=1.$ Also, we  omit the subscript $\mathbb R$ and $\mathrm d \xi \mathrm d \eta $ of the integrands if there is no ambiguity.
Write
\begin{align*}
\langle A^r e^{\delta A^{\frac1\sigma}}(u\partial_xw), A^r e^{\delta A^{\frac1\sigma}} w\rangle=
\langle A^r e^{\delta A^{\frac1\sigma}}(u\partial_xw), A^r e^{\delta A^{\frac1\sigma}} w\rangle&-
\langle u\partial_x A^r e^{\delta A^{\frac1\sigma}} w, A^r e^{\delta A^{\frac1\sigma}} w\rangle\\
&\ \ +
\langle  u\partial_x A^r e^{\delta A^{\frac1\sigma}} w, A^r e^{\delta A^{\frac1\sigma}} w\rangle.
\end{align*}
Note that
\begin{align}\label{I0}
\big|\langle  u\partial_x A^r e^{\delta A^{\frac1\sigma}} w, A^r e^{\delta A^{\frac1\sigma}} w\rangle\big|
&=\big|\int u\big(\partial_x A^r e^{\delta A^{\frac1\sigma}} w\big) A^r e^{\delta A^{\frac1\sigma}} w\big|\nonumber\\
&=\big|-\frac12 \int u_x (A^r e^{\delta A^{\frac1\sigma}} w)^2\big|\nonumber\\
&\leq \frac12\|u_x\|_{L^\infty} \|A^r e^{\delta A^{\frac1\sigma}} w\|^2\nonumber\\
&\leq C_r\|u\|_{H^r}(\|w\|^2_{H^r}+\delta \|w\|^2_{ G^\delta_{\sigma,r+\frac 1{2\sigma}}}).
\end{align}
The inequality on the last line is due to Lemma \ref{interpolation-S-G lemma} with $l=1$ and the embedding
$H^{r-1}\hookrightarrow L^\infty.$\\
Denote $\phi=A^r\mathrm e^{\delta A^\frac1\sigma}w.$ Next, we need to find the bound of
\begin{align*}
 I&:= \langle A^r e^{\delta A^{\frac1\sigma}}(u\partial_xw), A^r e^{\delta A^{\frac1\sigma}} w\rangle-
\langle u\partial_x A^r e^{\delta A^{\frac1\sigma}} w, A^r e^{\delta A^{\frac1\sigma}} w\rangle\\
&\ =\langle u\partial_xw, A^r e^{\delta A^{\frac1\sigma}}\phi\rangle-
\langle u\partial_x A^r e^{\delta A^{\frac1\sigma}} w, \phi\rangle.
\end{align*}
By Planchel's  identity, we have
\begin{align*}
\langle u\partial_xw, A^r e^{\delta A^{\frac1\sigma}}\phi\rangle&=\int_{\mathbb R}(u\partial_xw) (A^r e^{\delta A^{\frac1\sigma}}\phi)\mathrm d x\\
&=i\int_{\mathbb R}\bar{\hat\phi}(\xi)(1+|\xi^2|)^{\frac r2}
\mathrm e^{\delta(1+|\xi|^2)^{\frac1{2\sigma}}}\int_{\mathbb R}
\hat u(\xi-\eta)\cdot \eta \hat w(\eta)\mathrm d\eta\mathrm d \xi\\
&=i\iint\bar{\hat\phi}(\xi)
\hat u(\xi-\eta)\hat w(\eta) \,\eta\, (1+|\xi|^2)^{\frac r2}
\mathrm e^{\delta(1+|\xi|^2)^{\frac1{2\sigma}}},
\\ \langle u\partial_x A^r e^{\delta A^{\frac1\sigma}} w, \phi\rangle
&=i\iint\bar{\hat\phi}(\xi)
\hat u(\xi-\eta)\hat w(\eta)\,\eta\, (1+|\eta|^2)^{\frac r2}
\mathrm e^{\delta(1+|\eta|^2)^{\frac1{2\sigma}}},
\end{align*}
where $\bar{\hat\phi}$ denotes the complex conjugate of the Fourier transformation of $\phi$. Applying Lemma \ref{cha-gu-ji} to yield
\begin{align}\label{I-estimate}
|I|\leq\iint |\bar{\hat\phi}(\xi)|\,
|\hat u(\xi-\eta)|\, |\hat w(\eta)|\,|\eta|\, \big|(1+|\xi|^2)^{\frac r2}
\mathrm e^{\delta(1+|\xi|^2)^{\frac1{2\sigma}}}-(1+|\eta|^2)^{\frac r2}
\mathrm e^{\delta(1+|\eta|^2)^{\frac1{2\sigma}}}\big|\nonumber\\
\leq C_r\iint |\bar{\hat\phi}(\xi)|\,
|\hat u(\xi-\eta)|\, |\hat w(\eta)|\,|\eta|\, |\xi-\eta|\,\Big\{(1+|\xi-\eta|^2)^{\frac{r-1}2}+(1+|\eta|^2)^{\frac{r-1}2}\nonumber\\
  +\delta\big[
   (1+|\xi-\eta|^2)^{\frac{r-1}2+\frac1{2\sigma}}+(1+|\eta|^2)^{\frac{r-1}2+\frac1{2\sigma}}
   \big]\mathrm e^{\delta(1+|\xi-\eta|^2)^\frac1{2\sigma}}\mathrm e^{\delta(1+|\eta|^2)^\frac1{2\sigma}}
   \Big\}.
\end{align}
By the definition of $\phi$, and the fact that $\mathrm e^y\leq1+y\mathrm e^y$ for $y\geq0,$ we obtain
\begin{align}\label{phi-decomposition}
  |\bar{\hat\phi}(\xi)|&=(1+|\xi|^2)^{\frac r2}\mathrm e^{\delta(1+|\xi|^2)^{\frac1\sigma}}|\bar{\hat w}(\xi)|\nonumber\\
  &\leq (1+|\xi|^2)^{\frac r2}\big( 1+\delta(1+|\xi|^2)^{\frac1\sigma}\mathrm e^{\delta(1+|\xi|^2)^{\frac1\sigma}}\big)\,|\bar{\hat w}(\xi)|\nonumber\\
  &=(1+|\xi|^2)^{\frac r2}|\bar{\hat w}(\xi)|+\delta(1+|\xi|^2)^{\frac1\sigma}|\bar{\hat \phi}(\xi)|.
\end{align}
Combining the estimate (\ref{I-estimate}) and (\ref{phi-decomposition}), we can divide  (\ref{I-estimate}) into four
parts
\begin{align}\label{two-parts}
 |I|&\leq C_r\iint (1+|\xi|^2)^{\frac r2}|\bar{\hat w}(\xi)|\,
|\hat u(\xi-\eta)|\, |\hat w(\eta)|\,|\eta|\, |\xi-\eta|\,\Big\{(1+|\xi-\eta|^2)^{\frac{r-1}2}+(1+|\eta|^2)^{\frac{r-1}2}\Big\}\mathrm d \eta\mathrm d \xi\nonumber\\
 &\ \ \ \ \ \ \  \  +C_r\delta\!\!\iint |\bar{\hat \phi}(\xi)|\,
|\hat u(\xi-\eta)|\, |\hat w(\eta)|\,|\eta|\, |\xi-\eta|\nonumber\\
 &\ \ \ \ \ \ \  \  \ \ \ \ \ \ \ \ \ \ \ \  \times\Big\{
   (1+|\xi-\eta|^2)^{\frac{r-1}2+\frac1{2\sigma}}+(1+|\eta|^2)^{\frac{r-1}2+\frac1{2\sigma}}
   \Big\}\mathrm e^{\delta(1+|\xi-\eta|^2)^\frac1{2\sigma}}
   \mathrm e^{\delta(1+|\eta|^2)^\frac1{2\sigma}}\mathrm d \eta\mathrm d \xi\nonumber\\
   &:= C_r(\, I_1+I_2\,)+ C_r\,\delta\cdot(I_3+I_4).
\end{align}
Firstly we consider the term
\begin{align}\label{I1}
I_1&=\iint (1+|\xi|^2)^{\frac r2}|\bar{\hat w}(\xi)|\,
|\hat u(\xi-\eta)|\, |\hat w(\eta)|\,|\eta|\, |\xi-\eta|\,(1+|\xi-\eta|^2)^{\frac{r-1}2}\mathrm d \eta\mathrm d \xi\nonumber\\
& \leq \int  |\hat w(\eta)|\,(1+|\eta|^2)^{\frac12} \Big(\int
(1+|\xi|^2)^{\frac r2}|\bar{\hat w}(\xi)|\,(1+|\xi-\eta|^2)^{\frac{r}2}
|\hat u(\xi-\eta)|\,\mathrm d \xi \Big)\mathrm d \eta\nonumber\\
& \leq \int  |\hat w(\eta)|\,(1+|\eta|^2)^{\frac r2}
 (1+|\eta|^2)^{\frac12-\frac r2}\mathrm d \eta\, \|A^ru\|\,\|A^rw\|\nonumber\\
& \leq C_r \|A^ru\|\,\|A^rw\|^2.
\end{align}
Similarly, after the transformation $\xi'=\xi, \eta'=\xi-\eta,$ we have
\begin{align}\label{I2}
I_2=\iint (1+|\xi|^2)^{\frac r2}|\bar{\hat w}(\xi)|\,
|\hat u(\xi-\eta)|\, |\hat w(\eta)|\,|\eta|\, |\xi-\eta|\,(1+|\eta|^2)^{\frac{r-1}2}\leq C_r\|A^ru\|\,\|A^rw\|^2.
\end{align}
Next, we consider the term $I_3$  in (\ref{two-parts}). Taking advantage  of (\ref{estimate2}) and (\ref{ji-ben-bu-deng-shi}), we obtain
\begin{align*}
(1+|\xi-\eta|^2)^\frac1{4\sigma} \leq\big((1+|\xi|^2)^\frac12+(1+|\eta|^2)^\frac12\big)^\frac1{2\sigma}\leq
(1+|\xi|^2)^\frac1{4\sigma}+(1+|\eta|^2)^\frac1{4\sigma},
\end{align*}
and hence
\begin{align}\label{I3}
I_3&=\iint |\bar{\hat \phi}(\xi)|\,
|\hat u(\xi-\eta)|\, |\hat w(\eta)|\,|\eta|\, |\xi-\eta|\,
   (1+|\xi-\eta|^2)^{\frac{r-1}2+\frac1{2\sigma}}\mathrm e^{\delta(1+|\xi-\eta|^2)^\frac1{2\sigma}}\mathrm e^{\delta(1+|\eta|^2)^\frac1{2\sigma}}\nonumber\\
   &\leq
   \iint |\bar{\hat \phi}(\xi)|\,|\hat u(\xi-\eta)|\, |\hat w(\eta)|\,(1+|\eta|^2)^\frac12
   (1+|\xi-\eta|^2)^{\frac{r}2+\frac1{4\sigma}}\nonumber\\
 &\ \ \ \ \ \ \ \ \ \ \ \ \ \ \ \ \ \ \ \ \ \ \ \ \ \ \ \  \ \ \ \ \ \ \  \ \ \times\Big[ (1+|\xi|^2)^\frac1{4\sigma}+(1+|\eta|^2)^\frac1{4\sigma}\Big]\mathrm e^{\delta(1+|\xi-\eta|^2)^\frac1{2\sigma}}
   \mathrm e^{\delta(1+|\eta|^2)^\frac1{2\sigma}}\nonumber\\
   &:= I_{31}+I_{32}.
\end{align}
It then follows that
\begin{align}\label{I31}
I_{31}&=\iint|\bar{\hat \phi}(\xi)|(1+|\xi|^2)^\frac1{4\sigma}|\hat u(\xi-\eta)|\,
   (1+|\xi-\eta|^2)^{\frac{r}2+\frac1{4\sigma}} \mathrm e^{\delta(1+|\xi-\eta|^2)^\frac1{2\sigma}}
   |\hat w(\eta)|\,(1+|\eta|^2)^\frac12| \mathrm e^{\delta(1+|\eta|^2)^\frac1{2\sigma}}\mathrm d \xi\mathrm d\eta\nonumber\\
 & \leq \int |\hat w(\eta)|\,(1+|\eta|^2)^\frac r2\mathrm e^{\delta(1+|\eta|^2)^\frac1{2\sigma}}
 (1+|\eta|^2)^{\frac12-\frac r2}\mathrm d\eta\,
   \|A^{r+\frac1{2\sigma}}\mathrm e^{\delta A^\frac1\sigma}\!u\|\,\|A^\frac1{2\sigma}\phi\|\nonumber\\
  &\leq \Big(\int
 (1+|\eta|^2)^{1-r}\mathrm d\eta\Big)^\frac12
 \|A^{r}\mathrm e^{\delta A^\frac1\sigma}\!w\|\,
   \|A^{r+\frac1{2\sigma}}\mathrm e^{\delta A^\frac1\sigma}\!u\|\,\|A^{r+\frac1{2\sigma}}\mathrm e^{\delta A^\frac1\sigma}\!w\|\nonumber\\
   &=C_r\|A^{r}\mathrm e^{\delta A^\frac1\sigma}\!w\|\,
   \|A^{r+\frac1{2\sigma}}\mathrm e^{\delta A^\frac1\sigma}\!u\|\,\|A^{r+\frac1{2\sigma}}\mathrm e^{\delta A^\frac1\sigma}\!w\|,
\end{align}
and
\begin{align}\label{I32}
I_{32}&=\iint|\bar{\hat \phi}(\xi)|\, |\hat u(\xi-\eta)|\,
   (1+|\xi-\eta|^2)^{\frac{r}2+\frac1{4\sigma}} \mathrm e^{\delta(1+|\xi-\eta|^2)^\frac1{2\sigma}}
   |\hat w(\eta)|\,(1+|\eta|^2)^{\frac12+\frac1{4\sigma}}| \mathrm e^{\delta(1+|\eta|^2)^\frac1{2\sigma}}\mathrm d \xi\mathrm d\eta\nonumber\\
 &\leq \int |\hat w(\eta)|\,(1+|\eta|^2)^{\frac r2+\frac1{4\sigma}}\mathrm e^{\delta(1+|\eta|^2)^\frac1{2\sigma}}
 (1+|\eta|^2)^{\frac12-\frac r2}\mathrm d\eta\,
   \|A^{r+\frac1{2\sigma}}\mathrm e^{\delta A^\frac1\sigma}\!u\|\,\|\phi\|\nonumber\\
  &\leq \Big(\int
 (1+|\eta|^2)^{1-r}\mathrm d\eta\Big)^\frac12
 \|A^{r+\frac1{2\sigma}}\mathrm e^{\delta A^\frac1\sigma}\!w\|\,
   \|A^{r+\frac1{2\sigma}}\mathrm e^{\delta A^\frac1\sigma}\!u\|\,\|A^{r}\mathrm e^{\delta A^\frac1\sigma}\!w\|\nonumber\\
   &=C_r\|A^{r}\mathrm e^{\delta A^\frac1\sigma}\!w\|\,
   \|A^{r+\frac1{2\sigma}}\mathrm e^{\delta A^\frac1\sigma}\!u\|\,\|A^{r+\frac1{2\sigma}}\mathrm e^{\delta A^\frac1\sigma}\!w\|.
\end{align}
Plugging (\ref{I31}) and (\ref{I32}) into (\ref{I3}) yields
\begin{align}\label{I3'}
  I_3\leq C_r\|A^{r}\mathrm e^{\delta A^\frac1\sigma}\!w\|\,
   \|A^{r+\frac1{2\sigma}}\mathrm e^{\delta A^\frac1\sigma}\!u\|\,\|A^{r+\frac1{2\sigma}}\mathrm e^{\delta A^\frac1\sigma}\!w\|.
\end{align}
Similarly, by the transformation $\xi'=\xi,\eta'=\xi-\eta,$ we obtain
\begin{align}\label{I4}
I_4&=\iint |\bar{\hat \phi}(\xi)|\,
|\hat u(\xi-\eta)|\, |\hat w(\eta)|\,|\eta|\, |\xi-\eta|(1+|\eta|^2)^{\frac{r-1}2+\frac1{2\sigma}}
   \mathrm e^{\delta(1+|\xi-\eta|^2)^\frac1{2\sigma}}\mathrm e^{\delta(1+|\eta|^2)^\frac1{2\sigma}}\nonumber\\
&\leq C_r\|A^{r}\mathrm e^{\delta A^\frac1\sigma}\!u\|\,
   \|A^{r+\frac1{2\sigma}}\mathrm e^{\delta A^\frac1\sigma}\!w\|^2.
\end{align}
According to (\ref{I0}), (\ref{I1}), (\ref{I2}), (\ref{I3'}) and (\ref{I4}), we complete the proof of Lemma \ref{commutator}.
\qed

Now we can state a main theorem of this section, which implies is the global Gevrey regularity and analyticity result of the equation
(\ref{B}).
\begin{theo}\label{one-component-global}
 Assuem $s=[s]\geq2, a=2$ and $\sigma\geq1.$ Let $u_{0}$ be in $ G_{\sigma}(\mathbb R).$   Then there exists  a unique
 global solution $u$ of (\ref{B}) in Gevrey class $\sigma,$ namely, for any $t\geq0,$ $u(t,\cdot)$ is of Gevrey class $\sigma.$
\end{theo}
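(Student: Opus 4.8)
The plan is to anchor on the short-time Gevrey solution produced by Theorem \ref{local-one-component} and to show that its Gevrey radius cannot collapse in finite time, so that the solution — which by Lemma \ref{global-Sobolev1} already exists globally in the Sobolev scale $H^q$ with $q>s+\frac12$ — remains in some $G^{\delta(t)}_{\sigma,q}$ with $\delta(t)>0$ for every $t\ge0$. Since $u_0\in G_\sigma$ implies $u_0\in G^{\delta_0}_{\sigma,q}$ for some $\delta_0>0$ (one may take $q$ as large as Lemma \ref{global-Sobolev1} requires at the price of a smaller radius, since a polynomial loss of regularity is dominated by shrinking the exponential weight), I would introduce a decreasing radius function $\delta(t)$ with $\delta(0)=\delta_0$ and track the Gevrey energy $y(t)^2:=\|u(t)\|^2_{G^{\delta(t)}_{\sigma,q}}=\|A^q e^{\delta(t)A^{\frac1\sigma}}u(t)\|^2$. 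Differentiating and substituting $u_t=-uu_x+K(u,u)$ from (\ref{equ.u}) gives
$$\tfrac12\tfrac{\mathrm d}{\mathrm d t}y^2=\dot\delta\,\|u\|^2_{G^\delta_{\sigma,q+\frac1{2\sigma}}}-\langle A^qe^{\delta A^{\frac1\sigma}}(uu_x),A^qe^{\delta A^{\frac1\sigma}}u\rangle+\langle A^qe^{\delta A^{\frac1\sigma}}K(u,u),A^qe^{\delta A^{\frac1\sigma}}u\rangle,$$
where the first term, produced by $\partial_t\delta$, is nonpositive (as $\dot\delta<0$) and is reserved to absorb the most dangerous, highest-order contributions.

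The transport term is handled directly by the commutator estimate of Lemma \ref{commutator} with $w=u$ and $r=q$, which yields
$$\big|\langle A^qe^{\delta A^{\frac1\sigma}}(uu_x),A^qe^{\delta A^{\frac1\sigma}}u\rangle\big|\le C\|u\|^3_{H^q}+C\delta\,\|u\|_{G^\delta_{\sigma,q}}\|u\|^2_{G^\delta_{\sigma,q+\frac1{2\sigma}}};$$
decisively, the genuinely top-order part carries an explicit factor $\delta$, while the remainder is measured purely in the Sobolev norm. For the nonlocal term I would exploit that each summand of $K(u,u)$ has the form $(1-\partial_x^2)^{-s}\partial_x^{2i-1}[(\partial_x^ju)^2]$ with $(2i-1)+2j\le 2s+1$, so the smoothing of order $2s$ dominates the $2i-1$ derivatives and $K(u,u)$ is in fact smoother than $u$; pairing at level $q$ therefore produces no top-order seminorm. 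Using a tame product estimate in the Gevrey scale — obtained by distributing the exponential weight across the two factors as in the proof of Lemma \ref{commutator} (cf. (\ref{estimate3})) and loading the high frequency with the Gevrey norm and the low frequency with the Sobolev norm — I expect
$$\big|\langle A^qe^{\delta A^{\frac1\sigma}}K(u,u),A^qe^{\delta A^{\frac1\sigma}}u\rangle\big|\le \|K(u,u)\|_{G^\delta_{\sigma,q}}\,\|u\|_{G^\delta_{\sigma,q}}\le C\|u\|_{H^q}\,\|u\|^2_{G^\delta_{\sigma,q}}.$$

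Writing $M(t):=\|u(t)\|_{H^q}$, which is bounded on every $[0,T]$ by Lemma \ref{global-Sobolev1}, and collecting the two estimates, the only term containing the top-order seminorm $\|u\|^2_{G^\delta_{\sigma,q+\frac1{2\sigma}}}$ is $(\dot\delta+C\delta y)\|u\|^2_{G^\delta_{\sigma,q+\frac1{2\sigma}}}$. I would therefore prescribe $\delta(t)$ through the ODE $\dot\delta=-C\delta\,y$, which annihilates that term and leaves the closed differential inequality $\tfrac{\mathrm d}{\mathrm d t}y^2\le C\big(M^3+My^2\big)$. Since $M$ is bounded on $[0,T]$, Gronwall's inequality bounds $y$ on $[0,T]$; feeding this back into $\dot\delta=-C\delta y$ gives $\delta(t)=\delta_0\exp\!\big(-C\!\int_0^t y\,\mathrm d\tau\big)>0$ on $[0,T]$. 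As $T$ is arbitrary, $u(t)\in G^{\delta(t)}_{\sigma,q}\subset G_\sigma$ for all $t\ge0$, and uniqueness is inherited from the local Gevrey uniqueness of Theorem \ref{local-one-component} together with uniqueness in the ambient space $H^q$. The coupling between $\delta$ and $y$ is resolved by a standard continuity (bootstrap) argument on the maximal interval where $\delta>0$.

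The step I expect to be the main obstacle is the tame estimate for $K(u,u)$: one must verify that, after the $(1-\partial_x^2)^{-s}$ smoothing absorbs the $2i-1$ derivatives, every factor that cannot be compensated is measured in the globally controlled norm $\|u\|_{H^q}$, so that the bound is at most \emph{quadratic} in $y$. If instead one only uses the symmetric algebra estimate of Proposition \ref{productlaws}, the $K$-contribution becomes cubic ($Cy^3$) and, even after the top-order term is absorbed by $\dot\delta$, the resulting inequality $\dot y\lesssim y^2$ would permit finite-time blow-up of the radius. Ruling this out is exactly what the high–low frequency splitting, combined with the interpolation inequality (\ref{interpolation-S-G}) of Lemma \ref{interpolation-S-G lemma}, is designed to achieve.
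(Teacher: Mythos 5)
Your overall architecture is the paper's: a shrinking radius $\delta(t)$, Lemma \ref{commutator} for the transport term, absorption of the top-order seminorm $\|u\|^2_{G^{\delta}_{\sigma,q+\frac1{2\sigma}}}$ into the nonpositive $\dot\delta$-term, Gronwall against the globally controlled Sobolev norm $\theta(t)=\|u(t)\|_{H^q}$ from Lemma \ref{global-Sobolev1}, and a bootstrap to decouple $\delta$ from $y$. The genuine gap is precisely the step you flag as ``the main obstacle'': the tame bound
\begin{equation*}
\big|\langle A^{q}e^{\delta A^{1/\sigma}}K(u,u),\,A^{q}e^{\delta A^{1/\sigma}}u\rangle\big|\le C\,\|u\|_{H^{q}}\,\|u\|^{2}_{G^{\delta}_{\sigma,q}}
\end{equation*}
is false, and no high--low frequency splitting can rescue it. The weight obeys $e^{\delta(1+|\xi|^2)^{1/(2\sigma)}}\le e^{\delta(1+|\xi-\eta|^2)^{1/(2\sigma)}}e^{\delta(1+|\eta|^2)^{1/(2\sigma)}}$ with near-equality when $\xi-\eta$ and $\eta$ are comparable and aligned (for $\sigma=1$ the weight is essentially multiplicative on aligned frequencies), so in any product \emph{both} factors must carry the exponential weight: testing on $\hat u$ concentrated near frequency $N$, the quantity $\|u^{2}\|_{G^{\delta}_{1,q}}$ grows like $e^{2\delta N}$ while $\|u\|_{H^{q}}\|u\|_{G^{\delta}_{1,q}}$ grows only like $e^{\delta N}$. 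Thus the $K$-pairing is irreducibly cubic in weighted norms, and the smoothing by $(1-\partial_x^2)^{-s}$ helps only with the polynomial (Sobolev) order, not with the exponential factor.

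The paper's resolution, which you should substitute for the tame estimate, is to accept the cubic bound $\big|\langle A^{q}e^{\delta A^{1/\sigma}}K(u,u),A^{q}e^{\delta A^{1/\sigma}}u\rangle\big|\le C_{q,s}\|u\|^{3}_{G^{\delta}_{\sigma,q}}$ (Proposition \ref{productlaws} together with the $2s$-order smoothing) and then convert it into an absorbable form by Lemma \ref{interpolation-S-G lemma} with $l=\frac23$, i.e. $\|u\|_{G^{\delta}_{\sigma,q}}\le\sqrt{e}\,\|u\|_{H^{q}}+(2\delta)^{1/3}\|u\|_{G^{\delta}_{\sigma,q+\frac1{3\sigma}}}$, followed by the Sobolev interpolation $\|f\|_{H^{q+\frac1{3\sigma}}}\le\|f\|_{H^{q}}^{1/3}\|f\|_{H^{q+\frac1{2\sigma}}}^{2/3}$ applied to $f=e^{\delta A^{1/\sigma}}u$. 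This yields
\begin{equation*}
\|u\|^{3}_{G^{\delta}_{\sigma,q}}\le C\big(\|u\|^{3}_{H^{q}}+\delta\,\|u\|_{G^{\delta}_{\sigma,q}}\|u\|^{2}_{G^{\delta}_{\sigma,q+\frac1{2\sigma}}}\big),
\end{equation*}
whose second piece has exactly the same form as the top-order part of the transport term and is killed by the choice of $\dot\delta$; what survives is only $C\theta^{3}$, so the closed inequality is $\frac{\mathrm d}{\mathrm dt}y^{2}\le C\theta^{3}$ rather than the $\dot y\lesssim y^{2}$ you feared. Your remaining deviation is cosmetic: you prescribe $\dot\delta=-C\delta y$ and appeal to a continuity argument, whereas the paper sets $\dot\delta=-C\delta h$ with the explicit a priori bound $h^{2}(t)=2\|u_0\|^{2}_{G^{\delta_0}_{\sigma,q}}+2C\int_0^{t}\theta^{3}$ and bootstraps $y\le h$; either works once the $K$-estimate is repaired.
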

\begin{proof}
   Here  we only derive  the global  a priori  bounds  on $u$ in the  time-dependent space $G^{\delta(t)}_{\sigma, q}$. One can use Fourier-Galerkin approximating method  to construct local solutions in $G^{\delta(t)}_{\sigma, q}$, and globalize the result by the later estimate (\ref{globalization}).  In the following, we  will find a $\delta(t)$ to keep the solution of Gervey class $\sigma$. Note that  if $u_0\in G^{\delta_0}_{\sigma,q}$ then
   $u_0\in G^{\delta_0-\varepsilon}_{\sigma,\infty}$ for any $\varepsilon>0.$ Without loss of generality, we may
   assume that $q>s+\frac12$ and  $u_0\in G^1_{\sigma,q}(\mathbb R).$
  Since  for  every $t\in[0,T)$
  \begin{align*}
  u_t=-uu_x+K(u,u),
  \end{align*}
  we then have
  \begin{align}\label{energy-estimate}
   \frac{\mathrm d}{\mathrm d t}\|u\|^2_{ G^{\delta(t)}_{\sigma,q}}
   &=\frac{\mathrm d}{\mathrm d t}
   \int (1+|\xi|^2)^q\mathrm e^{2\delta(t)(1+|\xi|^2)^\frac1{2\sigma}}
   \hat u(\xi)\bar{\hat u}(\xi)\mathrm d\xi\nonumber\\
   &=2\dot\delta(t)\int (1+|\xi|^2)^{q+\frac1{2\sigma}}
   \mathrm e^{2\delta(t)(1+|\xi|^2)^\frac1{2\sigma}}\hat u(\xi)\bar{\hat u}(\xi)\mathrm d \xi\nonumber\\
  &\ \ \ +2\mathfrak{Re} \int (1+|\xi|^2)^q\mathrm e^{2\delta(t)(1+|\xi|^2)^\frac1{2\sigma}}\big(-\widehat {uu_x}(\xi)+\widehat{K(u,u)}(\xi)\big)\bar{\hat u}(\xi)\mathrm d\xi,
  \end{align}
  where $\mathfrak{Re}$ denotes the real part of a complex number. Applying Lemma \ref{commutator} to obtain
  \begin{align}\label{control1}
 \big|\int (1+|\xi|^2)^q\mathrm e^{2\delta(t)(1+|\xi|^2)^\frac1{2\sigma}}\widehat {uu_x}(\xi)\bar{\hat u}(\xi)
  \mathrm d\xi\big|
  &= |\langle A^q e^{\delta A^{\frac1\sigma}}\!(u\partial_x u), A^q e^{\delta A^{\frac1\sigma}}\! u\rangle\nonumber|\\
  &\leq C_q \big(\,\|u\|^3_{H^q}+\delta \|u\|_{G^{\delta}_{\sigma,q}}\|u\|^2_{G^{\delta}_{\sigma,q+\frac1{2\sigma}}}\big).
  \end{align}
  To control  the term involved $K(u,u),$ for simplicity, we only control the first and the last term of the highest order in
  (\ref{equ.u}), namely,
  \begin{align*}
&\big|\int (1+|\xi|^2)^q\mathrm e^{2\delta(t)(1+|\xi|^2)^\frac1{2\sigma}}\mathcal{F}\big( (1-\partial_x^2)^{-s}\partial_x^{2s-1}(u_x^2)\big)(\xi)\bar{\hat u}(\xi)\mathrm d\xi\big|\\
&\ =\big| \langle A^q\mathrm e^{\delta A^\frac1\sigma}\big( (1-\partial_x^2)^{-s}\partial_x^{2s-1}(u_x^2)\big),
A^q\mathrm e^{\delta A^\frac1\sigma}u\rangle\big|\\
&\ \leq \| A^{q-2s}\mathrm e^{\delta A^\frac1\sigma}\partial_x^{2s-1}(u_x^2)\|_{L^2}\|A^q\mathrm e^{\delta A^\frac1\sigma}u\|_{L^2}\\
&\ \leq \|u^2_x\|_{ G^\delta_{\sigma,q-1}}\|u\|_{ G^\delta_{\sigma,q}}\\
&\ \leq C_q\|u\|^3_{ G^\delta_{\sigma,q}},
  \end{align*}
  and
  \begin{align*}
   &\big|\int (1+|\xi|^2)^q\mathrm e^{2\delta(t)(1+|\xi|^2)^\frac1{2\sigma}}\mathcal{F}\big( (1-\partial_x^2)^{-s}\partial_x[(\partial_x^su)^2]\big)(\xi)\bar{\hat u}(\xi)\mathrm d\xi\big|\\
&\ =\big| \langle A^q\mathrm e^{\delta A^\frac1\sigma}\big( (1-\partial_x^2)^{-s}\partial_x[(\partial_x^su)^2]\big),
A^q\mathrm e^{\delta A^\frac1\sigma}u\rangle\big|\\
&\ \leq  \| A^{q-2s}\mathrm e^{\delta A^\frac1\sigma}\partial_x[(\partial_x^su)^2]
\|_{L^2}\|A^q\mathrm e^{\delta A^\frac1\sigma}u\|_{L^2}\\
&\ \leq \|(\partial_x^su)^2\|_{ G^\delta_{\sigma,q-2s+1}}\|u\|_{ G^\delta_{\sigma,q}}\\
&\ \leq\|(\partial_x^su)^2\|_{ G^\delta_{\sigma,q-s}}\|u\|_{ G^\delta_{\sigma,q}}\\
&\ \leq C_{q,s}\|u\|^3_{ G^\delta_{\sigma,q}}.
  \end{align*}
  Therefore, using  Lemma \ref{interpolation-S-G lemma} with $l=\frac23,$ we have
  \begin{align}\label{control2}
 \Big| \int (1+|\xi|^2)^q\mathrm e^{2\delta(t)(1+|\xi|^2)^\frac1{2\sigma}}
 \widehat{K(u,u)}(\xi)\bar{\hat u}(\xi)\mathrm d\xi\Big|
  &\leq C_{q,s}\|u\|^3_{ G^\delta_{\sigma,q}}\nonumber\\
  &\leq C_{q,s}\big(\,\|u\|^3_{H^q}+\delta \|
  \mathrm e^{\delta A^\frac1\sigma}\!u\|^3_{H^{q+\frac1{3\sigma}}}\big)\nonumber\\
  &\leq C_{q,s} \big(\,\|u\|^3_{H^q}+\delta \|u\|_{ G^{\delta}_{\sigma,q}}
  \|u\|^2_{ G^{\delta}_{\sigma,q+\frac1{2\sigma}}}\big).
  \end{align}
  The inequality on the last line is due to the Sobolev interpolation inequality
  $$\|f\|_{H^{q+\frac1{3\sigma}}}\leq \|f\|^{\frac13}_{H^{q}}\|f\|^{\frac23}_{H^{q+\frac1{2\sigma}}}.$$
    Plugging estimates (\ref{control1}) and (\ref{control2}) into (\ref{energy-estimate}) yields
  \begin{align}\label{energy-estimate1}
  \frac12\frac{\mathrm d}{\mathrm d t}\|u(t)\|^2_{ G^{\delta(t)}_{\sigma,q}}
  \leq \big(\dot\delta(t)+C \delta(t)\|u(t)\|_{ G^{\delta(t)}_{\sigma,q}}\big)\,
   \|u(t)\|^2_{ G^{\delta(t)}_{\sigma,q+\frac1{2\sigma}}}+ C\|u(t)\|^3_{H^q}.
  \end{align}
  Theorem \ref{global-Sobolev1} guarantees the existence  of global classical solution
   $u\in \mathcal C([0,\infty);H^q)$, whence $\theta(t):=\|u(t)\|_{H^q}\in \mathcal C(\mathbb R^+).$ To
   ensure that the first term on the right-hand side  of (\ref{energy-estimate1}) is negligible, we  can set
  \begin{align}\label{sufficient}
    \dot\delta(t)= -C h(t) \delta(t),
   \end{align}
   or
    \begin{align}
   \delta(t)=\delta_0 \exp \big(-C\int_0^t h(t')\mathrm d t'\big),
   \end{align}
   where $0<\delta_0<1,$ and
   $$h^2(t):=2\|u_0\|_{ G^{\delta_0}_{\sigma,q}}^2+
   2C\int_0^t \theta^3(t')\mathrm d t'.$$
   Thus, we can use the bootstrap argument to ensure that, for any $t\in[0,\infty),$
   \begin{align}\label{globalization}
   \|u(t)\|_{ G^{\delta(t)}_{\sigma,q}}^2\leq h^2(t)=2\|u_0\|_{ G^{\delta_0}_{\sigma,q}}^2+
   2C\int_0^t \theta^3(t')\mathrm d t'.
   \end{align}
   This completes the proof of Theorem \ref{one-component-global}.
   \begin{rema}
 Let $\sigma=1,\,a=2.$ According to the Theorem \ref{local-one-component},  we can obtain the global analyticity in time as well as space. Namely,
  for  real analytic $u_0\in G_1$, there exists a unique  analytic solution to (\ref{B}), i.e.,
     $$u\in C^\omega([0,\infty)\times \mathbb R).$$
     Moreover, for every $t\geq0$, the solution $w(t)$ lies in Gevrey class $G_1$.
   \end{rema}
   \begin{rema}
    When $s>\frac32,\,a=2$, according to Lemma \ref{1-global-11}, we can also have the global analyticity (or Gevrey regularity) solution if the
    initial data $u_0$ is in $G_1$ $($or $G_\sigma$ with $\sigma>1).$ However, we will not prospect the global analyticity or
    Gevrey regularity  when $s=1,\, a=2$ unless we restrict the sign condition on $u_0$ such that $m_0:=u_0-u_{0,xx}$ does
    not change sign.
   \end{rema}
\end{proof}
Similarly, we can obtain the following global Gevrey regularity and analyticity result of the two-component system (\ref{1}), which relies on the global strong solution results---Lemma \ref{2global1}--\ref{2global2}.
\begin{theo}
Let $a=2, \, \kappa\geq 0,\,  s>\frac32,\,\sigma \geq1.$  Assume that
$(u_0,\rho_0)\in G_\sigma(\mathbb R)\times G_{\sigma}(\mathbb R).$
 There exists  a unique
 global solution $(u,\rho)$ of (\ref{1}) in Gevrey class $G_\sigma\times G_\sigma,$ namely, for any $t\geq0,$ $(u(t,\cdot)$ and $\rho(t,\cdot))$ are both in Gevrey class $G_\sigma.$
\end{theo}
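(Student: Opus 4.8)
The plan is to mimic the proof of Theorem \ref{one-component-global} for the coupled system, deriving a global a priori bound on the combined Gevrey energy $\mathcal E(t):=\|u(t)\|_{G^{\delta(t)}_{\sigma,q}}^2+\|\rho(t)\|_{G^{\delta(t)}_{\sigma,q_1}}^2$ in a time-dependent radius $\delta(t)$, and then globalizing by a Fourier--Galerkin/bootstrap argument exactly as there. First, since $(u_0,\rho_0)\in G_\sigma\times G_\sigma$ and $G^{\delta_0}_{\sigma,r}\hookrightarrow G^{\delta_0-\varepsilon}_{\sigma,r'}$ for every $r'$, I would fix exponents $q\geq 2s$ and $q_1$ satisfying condition (\ref{condition3}) together with the extra requirement $q_1>\frac32$; this is possible because $s>\frac32$ makes the admissible interval $[q-2s+1,\,q-1]$ for $q_1$ have length $2s-2>1$. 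After rescaling I may assume $u_0\in G^1_{\sigma,q}$ and $\rho_0\in G^1_{\sigma,q_1}$. Lemma \ref{2global2} then furnishes a global strong solution $(u,\rho)\in C([0,\infty);H^q\times H^{q_1})$, so $\theta_1(t):=\|u(t)\|_{H^q}$ and $\theta_2(t):=\|\rho(t)\|_{H^{q_1}}$ are continuous on $[0,\infty)$; these are the Sobolev quantities that will ultimately be integrated over finite time.

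Next I would differentiate $\mathcal E(t)$. As in (\ref{energy-estimate}), each summand produces a good term $2\dot\delta(t)$ times the top-order Gevrey norm ($\|u\|^2_{G^\delta_{\sigma,q+\frac1{2\sigma}}}$ or $\|\rho\|^2_{G^\delta_{\sigma,q_1+\frac1{2\sigma}}}$) plus the real part of the nonlinear contributions. For the $u$-equation the terms $uu_x$ and $K(u,u)$ are handled verbatim as in (\ref{control1})--(\ref{control2}) using Lemma \ref{commutator} and the interpolation Lemma \ref{interpolation-S-G lemma}. The two genuinely new contributions in $F_1$ are smoothing or coupling: since $(1-\partial_x^2)^{-s}\partial_x$ gains $2s-1>0$ derivatives, the linear term contributes $\lesssim|\alpha|\,\|u\|^2_{G^\delta_{\sigma,q}}$, while the coupling term $(1-\partial_x^2)^{-s}\frac\kappa2\partial_x(\rho^2)$ contributes $\lesssim\|u\|_{G^\delta_{\sigma,q}}\|\rho^2\|_{G^\delta_{\sigma,q-2s+1}}\lesssim\|u\|_{G^\delta_{\sigma,q}}\|\rho\|^2_{G^\delta_{\sigma,q_1}}$, where I use $q-2s+1\leq q_1$ from condition (\ref{condition3}) and the algebra property of Proposition \ref{productlaws}.

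For the $\rho$-equation, note that $a=2$ kills the factor $(2-a)$, so $\rho_t=-u\rho_x-u_x\rho$. The transport term $u\rho_x$ is exactly of the form covered by Lemma \ref{commutator} with $r=q_1$ and $w=\rho$; the hypothesis $q_1>\frac32=1+\frac d2$ is precisely what makes the lemma applicable, and the bound involves norms of $u,\rho$ only at levels $\leq q,q_1$ or at the top levels $q+\frac1{2\sigma},q_1+\frac1{2\sigma}$. The one new feature compared with the scalar case is the mixed product of two top-order factors $\|u\|_{G^\delta_{\sigma,q+\frac1{2\sigma}}}\|\rho\|_{G^\delta_{\sigma,q_1+\frac1{2\sigma}}}$, which I would split by Young's inequality into $\|u\|^2_{G^\delta_{\sigma,q+\frac1{2\sigma}}}+\|\rho\|^2_{G^\delta_{\sigma,q_1+\frac1{2\sigma}}}$, the combined top-order norm. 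The remaining product $u_x\rho$ is estimated by Proposition \ref{productlaws} and $q_1+1\leq q$ as $\|u_x\rho\|_{G^\delta_{\sigma,q_1}}\lesssim\|u\|_{G^\delta_{\sigma,q}}\|\rho\|_{G^\delta_{\sigma,q_1}}$, contributing $\lesssim\|u\|_{G^\delta_{\sigma,q}}\|\rho\|^2_{G^\delta_{\sigma,q_1}}$. At this point every surviving contribution is either proportional to the combined top-order Gevrey norm with coefficient $\dot\delta+C\delta\sqrt{\mathcal E}$, or a base-level Gevrey product; applying Lemma \ref{interpolation-S-G lemma} with $l=\frac23$ and the Sobolev interpolation $\|f\|_{H^{q+\frac1{3\sigma}}}\leq\|f\|^{1/3}_{H^q}\|f\|^{2/3}_{H^{q+\frac1{2\sigma}}}$ (and its level-$q_1$ analogue) splits each base-level product into a purely Sobolev polynomial $Q(\theta_1,\theta_2)$ plus a $\delta$-weighted top-order term.

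This yields an inequality of the shape $\frac12\frac{\mathrm d}{\mathrm d t}\mathcal E\leq\big(\dot\delta+C\delta\sqrt{\mathcal E}\big)\big(\|u\|^2_{G^\delta_{\sigma,q+\frac1{2\sigma}}}+\|\rho\|^2_{G^\delta_{\sigma,q_1+\frac1{2\sigma}}}\big)+C\,Q(\theta_1(t),\theta_2(t))$. Choosing $\delta(t)=\delta_0\exp(-C\int_0^t h(t')\,\mathrm d t')$ with $h^2(t)=2\mathcal E(0)+2C\int_0^t Q(\theta_1,\theta_2)\,\mathrm d t'$ makes the bracket nonpositive as long as $\sqrt{\mathcal E}\leq h$, and a bootstrap argument identical to the one ending Theorem \ref{one-component-global} gives $\mathcal E(t)\leq h^2(t)<\infty$ for every $t\geq0$, the finiteness being guaranteed by the continuity of $\theta_1,\theta_2$ on finite intervals. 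Since $\delta(t)>0$ for all $t$, this shows $(u(t),\rho(t))\in G^{\delta(t)}_{\sigma,q}\times G^{\delta(t)}_{\sigma,q_1}\subset G_\sigma\times G_\sigma$, completing the proof. I expect the main obstacle to be the bookkeeping for the $\rho$-equation: one must verify that the exponents can be chosen with $q_1>\frac32$ compatibly with condition (\ref{condition3}) so that Lemma \ref{commutator} applies to $u\rho_x$, that each coupling term lands at a Gevrey level controlled by the other component (the repeated use of $q-2s+1\leq q_1$ and $q_1+1\leq q$), and that the mixed top-order product produced by the commutator is absorbed by Young's inequality, so that after interpolation only the harmless Sobolev polynomial and the top-order terms (killed by $\dot\delta$) remain.
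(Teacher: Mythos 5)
Your proposal is correct and follows exactly the route the paper intends: the paper gives no separate proof of this theorem, stating only that it follows ``similarly'' to Theorem \ref{one-component-global} using the global Sobolev solutions of Lemmas \ref{2global1}--\ref{2global2}, and your argument is precisely that adaptation (combined Gevrey energy, Lemma \ref{commutator} applied to both transport terms with $q_1>\frac32$ chosen compatibly with (\ref{condition3}), the smoothing of $(1-\partial_x^2)^{-s}\partial_x$ for the coupling terms, interpolation via Lemma \ref{interpolation-S-G lemma}, and the same choice of shrinking radius $\delta(t)$ with bootstrap). The bookkeeping you flag as the main obstacle does go through as you describe, so your write-up in fact supplies more detail than the paper itself.
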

\noindent\textbf{Acknowledgements.} This work was
partially supported by NNSFC (No.11271382),  FDCT (No. 098/2013/A3), Guangdong Special Support Program (No. 8-2015), and the key project of NSF of  Guangdong province (No. 2016A030311004).

\phantomsection
\addcontentsline{toc}{section}{\refname}

\end{document}